\newtheorem{theorem}{Theorem}[section]
\newtheorem{lemma}{Lemma}[section]
\newtheorem{proposition}{Proposition}[section]
\newtheorem{corollary}{Corollary}[section]
\title{Quasi-automatic semigroups}
\author{B. Blanchette}
\address{Benjamin Blanchette,
D\'epartement de math\'ematiques, Universit\'e du Qu\'ebec \`a Montr\'eal}
\email{benjamin.blanchette@gmail.com}
\author{C. Choffrut}
\address{Christian Choffrut,
IRIF
Universit\'e Paris-Diderot,
Case 7014
75205 Paris CEDEX 13}
\email{Christian.Choffrut@irif.fr}
\author{C. Reutenauer}
\address{Christophe Reutenauer,
D\'epartement de math\'ematiques, Universit\'e du Qu\'ebec \`a Montr\'eal}
\email{reutenauer.christophe@uqam.ca}
\date{\today}
\begin{document}

\maketitle

\hspace{80pt} \noindent {\it To the memory of Maurice Nivat}

\begin{abstract} A {\em quasi-automatic semigroup} is defined by a finite set of generators, a rational (regular) set of representatives, such that if $a$ is a generator or neutral, then the graph of right multiplication by $a$ on the set of representatives is a rational relation. This class of semigroups contains previously considered semigroups and groups (Sakarovitch, Epstein et al., Campbell et al.). Membership of a semigroup to this class does not depend on the choice of the generators. These semigroups are rationally presented. Representatives may be computed in exponential time. Their word problem is decidable in exponential time. They enjoy a property similar to the so-called Lipschitz property, or fellow traveler property. If graded, they are automatic. In the case of groups, they are finitely presented with an exponential isoperimetric inequality and they are characterized by the weak Lipschitz property.
\end{abstract}

\section{Introduction}

{\em Rational transductions} were one of the subjects of Maurice Nivat's thesis, published in 1968 \cite{N}. They are functions from a free monoid into the set of subsets of another free monoid, whose graph is a rational subset of the product monoid; this graph is then called a {\em rational relation}. See the books of Eilenberg \cite{E}, Berstel \cite{B} and Sakarovitch \cite{Sa2} for further reading.

Rational transductions are very useful tools in many domains. For example, decoding a finite code is a rational transduction which is moreover functional: 
these functions were studied first by Sch\"{u}tzenberger
and are a special case of {\em rational functions} which are 
functions whose graph is a rational  relation.
As another example, rational transductions serve to classify context-free languages, a point of view initiated by Nivat \cite{N}, using the concept of {\em abstract family of languages}; see also Berstel's book \cite{B}.

Discrete group theory is certainly one of the areas
where  rational transductions showed their significance and relevance. 
We claim however that they were not yet employed with their full strength. 
We explain why. 

A Kleene-like result 
shows the equivalence between rational relations and relations 
recognized by  two-tape automata. 
Automatic groups and semigroups, together with their asynchronous versions are defined 
via formal structures whose components are relations recognized by 
special types of two-tapes automata, thus of special rational relations.
The main idea of our contribution
is to substitute arbitrary rational relations for the restricted type of rational 
relations  of the literature.

This leads to the definition of a {\em quasi-automatic semigroup} (or {\em group}). Such a semigroup $S$ has a finite set $A$ of generators, and there exist rational subsets $L\subset A^+$, $R,R_a\subset A^+\times A^+$, $a\in A$, such that, $\mu$ being the canonical homomorphism $A^+\rightarrow S$, one has properties (1), (2) and (3) given in Section \ref{semigroup1}.

We give an brief outline of the contents of our work.

We begin by verifying that the monoid version of this definition is compatible with its semigroup version.
We compare quasi-automatic semigroups to previouly considered classes. We show that they contain strictly the rational semigroups of Sakarovitch. They contain also strictly the automatic semigroups of Campbell et al. They contain the asynchronously automatic semigroups of Wei et al., and we conjecture that this inclusion is strict, although we have no example proving it.

Automatic groups are defined using a set of generators; it is then shown that the definition is independent of the chosen set of generators; the same holds for the asynchronous groups. For semigroups however, automaticity depends on the set of generators (and it is unknown for asynchronous automatic semigroups). We show that our notion of quasi-automatic semigroups is independent of the generators.

We show that one may compute in 
exponential time a representative for each word.  We show that the word problem for a quasi-automatic 
semigroup is decidable in exponential time. We prove a weak Lipschitz property: roughly 
speaking, if two words $u,v$ are at distance at most 1 when viewed in the semigroup, then their prefixes, 
viewed in the semigroup, are at bounded distance; the strong form of this property, which is true for 
automatic semigroups, is when each prefix of $u$ is close to each prefix of $v$ of the same length: for 
groups, this property characterizes automaticity.

We show that if a quasi-automatic semigroup is graded, then it is automatic. Finally, we give two results on quasi-automatic groups. We show that a 
quasi-automatic group is finitely presented and has an exponential isoperimetric inequality: this means 
that the group is the quotient of a free group by a normal subgroup which is finitely generated (as normal 
subgroup) and that each element of it, of reduced length $k$, is a product of at most $C^k$ conjugates of generators of the normal subgroup. The last result is that the weak Lipschitz property for groups implies quasi-automaticity.

Several open questions are given at the end of the article.

A word about the proofs: they are all based on the theory of rational relations (or transductions), as one may find it in Berstel's book. We use 
several times Nivat's {\em bimorphism theorem}\footnote{The third author remembers very well lectures on transductions and in particular on this result, by Maurice Nivat, in 1974 at the University of Paris 7.}, and the {\em composition theorem} of 
Elgot and Mezei (which asserts that the composition of rational transductions is a rational tranduction). 
For complexity matters, we use a construction of Arnold and Latteux 
which is an effective version of the combination of two results: the fact that 
each rational relation contains a rational function with the same domain,
due to Eilenberg, and the fact that a rational is equal to the product of a left and of a right sequential function, due to Elgot and Mezei.

\section{Rationality}\label{rat}
Let $S$ be  semigroup. A subset of $S$ is {\em rational} if it is obtained from finite subsets of $S$ by applying the operations of {\em union} $E\cup F$, {\em product} $EF$ and {\em subsemigroup generation}  
$E^+=\cup_{n\geq 1}E^n$.

If $S$ turns out to be a monoid, then in the previous definition, one may replace subsemigroup generation by {\em submonoid generation} $E^*= 
\cup_{n\geq 0}E^n$. This is because $E^+=EE^*$ and $E^*=\{1\}\cup E^+$.

In the sequel, we are mainly interested in the case where $S$ is a finitely generated free semigoup, a finitely generated free monoid or a direct product of such semigroups.

Rationality of a subset is preserved under direct image by a semigroup homomorphism, see \cite{B} Proposition III.2.2.

A homomorphism from a free monoid to another one is called {\em alphabetic} if it sends each generator onto a generator or onto the empty word.

A rational subset of a free monoid $A^*$ is also called a {\em rational language}, whereas a rational 
subset of a product of two free monoids $A^*\times B^*$ is called a {\em rational relation}, or a {\em 
rational transduction}. The word ``transduction" refers to the fact that a relation $R$, subset of $A^*\times 
B^*$, may be seen as a function from $A^*$ into the set of subsets of $B^*$. More precisely, the {\em 
transduction} $\tau$ associated to a relation $R$ is the function $\tau(u)= \{v|(u,v)\in R\}$. A 
transduction extends naturally to a function from the subsets of $A^*$ to the subsets of $B^*$, which 
preserves arbitrary union.

Each rational subset of $A^+$ is a rational subset of $A^*$, and similarly for rational subsets of $A^+
\times B^+$.

Moreover, the intersection of $A^+$ and of any rational subset of $A^*$ is a rational subset of $A^+$. 
Similarly, 
the intersection of $A^+\times B^+$ and of any rational subset of $A^*\times B^*$ is a rational subset of 
$A^+\times B^+$.

A subset of a semigroup $S$ is called {\em recognizable} if it is the inverse image under some 
homomorphism of $S$ into a finite semigroup of a subset of the latter. Recognizable subsets of $S$ are 
closed under Boolean operations.

By Kleene's theorem, recognizable subsets of a free monoid $A^*$ ($A$ finite) coincide with rational 
subsets. It follows that the set of rational languages is closed under Boolean operations. Another 
consequence is that in a finitely generated semigroup, the intersection of a recognizable subset and of a 
rational subset is rational, see \cite{B} Proposition III.2.6; we apply this in the sequel for the monoid $A^*\times A^*$.

\begin{theorem}[Nivat's bimorphism theorem \cite{N} proposition 4 p. 354; see also \cite{B} Theorem III.
3.2]
For each rational relation $R\subset A^*\times B^*$, there exists a rational subset $H$ of some finitely 
generated free monoid 
$C^*$ and alphabetic homomorphisms $\alpha:C^*\rightarrow A^*$, $\beta:C^*\rightarrow B^*$, such that 
$R=\{(\alpha(w),\beta(w))|w\in H\}$.
One may even assume that for any letter $c\in C$, $\alpha(c)=1$ if and only if $\beta(c)\neq 1$. Thus, for 
any word $w$ in $C^*$, $|w|=|\alpha(w)|+|\beta(w)|$.
\end{theorem}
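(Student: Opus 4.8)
The plan is to build $C$, $H$, and the morphisms $\alpha,\beta$ by induction on the structure of a rational expression for $R$, which exists because $R$ is a rational subset of the product monoid $A^*\times B^*$. The elementary letters are fixed once and for all: I introduce a letter $\bar a$ for each $a\in A$ with $\alpha(\bar a)=a$, $\beta(\bar a)=1$, and a letter $\hat b$ for each $b\in B$ with $\alpha(\hat b)=1$, $\beta(\hat b)=b$. Every such letter satisfies $\alpha(c)=1\iff\beta(c)\neq1$ and has $|\alpha(c)|+|\beta(c)|=1$, so once all letters of $C$ are of this kind the final length identity $|w|=|\alpha(w)|+|\beta(w)|$ is automatic. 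At each stage $C$ is a disjoint collection of copies of such letters.

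For the base case $R$ is finite. To each pair $(u,v)\in R$, with $u=a_1\cdots a_p$ and $v=b_1\cdots b_q$, I associate the single word $\bar a_1\cdots \bar a_p\,\hat b_1\cdots\hat b_q$ in $C^*$; taking $H$ to be the finite set of these words gives a rational (indeed finite) $H$ with $\{(\alpha(w),\beta(w)):w\in H\}=R$.

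For the inductive step I use that $R$ is obtained from smaller rational relations by union, product, or $+$, and that the two morphisms turn these operations on relations into the corresponding operations on languages. If $R_i=\{(\alpha_i(w),\beta_i(w)):w\in H_i\}$ over alphabets $C_i$ for $i=1,2$, I take disjoint copies of the $C_i$ inside a common $C$ and combine $\alpha_1,\alpha_2$ and $\beta_1,\beta_2$ into single morphisms $\alpha,\beta$ on $C^*$. The morphism property of $\alpha,\beta$ then yields
\begin{gather*}
\{(\alpha(w),\beta(w)):w\in H_1\cup H_2\}=R_1\cup R_2, \\
\{(\alpha(w),\beta(w)):w\in H_1H_2\}=R_1R_2, \\
\{(\alpha(w),\beta(w)):w\in H^+\}=R^+,
\end{gather*}
the last applied to a single $H$ with image $R$. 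Since rational languages of a free monoid are closed under union, product, and $+$ by Kleene's theorem, the resulting $H$ is rational at every stage, which closes the induction; and since $C$ consists only of letters of the two elementary kinds, the biconditional and the length identity hold throughout.

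The verifications are routine; the only points needing genuine care are the two inclusions in the product and star cases. There the nontrivial direction is that every factorization of a pair $(u,v)\in R_1R_2$ (resp. $(u,v)\in R^+$) in the product monoid is realized by a factorization of a word of $H_1H_2$ (resp. $H^+$); this is exactly where the morphism property of $\alpha$ and $\beta$ is used, since it lets one split a witnessing word of $H$ along the factorizations of $u$ and of $v$ simultaneously. An alternative, equally valid route starts from a normalized two-tape automaton recognizing $R$, uses one letter of $C$ per transition, and then refines each transition-label letter into the elementary letters above; on that route the one delicate step is the passage from the rational expression to a finite automaton over the non-free monoid $A^*\times B^*$, after which the letter-by-letter refinement is immediate.
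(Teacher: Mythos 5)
The paper offers no proof of this statement: it is quoted as background, with the attribution to Nivat \cite{N} and Berstel \cite{B} standing in place of a proof, so your argument can only be judged on its own terms and against the literature. It is correct, and it is essentially the classical argument. Your structural induction works because the map $K\mapsto\{(\alpha(w),\beta(w))\mid w\in K\}$ commutes with union, product and $+$ once $\alpha,\beta$ are homomorphisms, every finite relation is such an image of a finite language, and your two kinds of elementary letters ($\bar a\mapsto (a,1)$, $\hat b\mapsto (1,b)$) make the alphabetic condition, the biconditional $\alpha(c)=1\Leftrightarrow\beta(c)\neq 1$, and the length identity automatic. The textbook route is usually organized differently in one respect: one first proves the general lemma that a surjective monoid homomorphism maps rational sets onto rational sets, applies it to the canonical surjection of the free monoid on $\{\bar a\mid a\in A\}\cup\{\hat b\mid b\in B\}$ onto $A^*\times B^*$ to write $R$ as the image of a rational language $H$, and then composes with the two projections; your proof inlines that lemma, which is legitimate, and it yields the normalization in the second sentence of the statement for free, whereas proofs allowing arbitrary letter images need a further letter-splitting step. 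Three small corrections. First, closure of rational languages under union, product and $+$ holds by the very definition of rational sets; Kleene's theorem (the coincidence with recognizable sets) is not needed anywhere in your argument. Second, the disjoint-copies bookkeeping is unnecessary: you may fix $C=\{\bar a\mid a\in A\}\cup\{\hat b\mid b\in B\}$ and the morphisms $\alpha,\beta$ once and for all, after which all three inductive steps are literal identities with no renaming. Third, in the product and $+$ cases you only need \emph{some} factorization of $(u,v)$ into factors lying in the smaller relations to lift to a word of $H_1H_2$ (resp.\ $H^+$), not \emph{every} factorization as you state (the stronger claim happens to be true but is beside the point); and if rationality in the monoid $A^*\times B^*$ is defined using $*$ rather than $+$, nothing changes, since the empty word of $C^*$ maps to $(1,1)$.
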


The {\em inverse} of the relation $R$ is the relation $R^{-1}=\{(u,v)|(v,u)\in R\}$.

The {\em composition} of two relations $R\subset A^*\times A^*$ and $R'\subset A^*\times A^*$ is the 
relation 
$R\circ R'=\{(u,v)\in A^*\times A^*|\exists m\in A^*, (u,m)\in R', (m,v)\in R\}$. Note that we follow the 
conventions of \cite{B}: if $\tau,\tau'$ are the transductions associated to $R,R'$, then the transduction $
\tau\circ \tau'$ is associated to the relation $R\circ R'$.

\begin{theorem} (Elgot and Mezei \cite{EM}; see also \cite{E} Theorem IX.4.1 or \cite{B} Theorem III.4.4)
The composition of two rational relations is a rational relation.
\end{theorem}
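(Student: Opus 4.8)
The plan is to represent both relations by Nivat bimorphisms and then glue them along the common middle alphabet $A$. By Nivat's theorem I would write $R=\{(\alpha(w),\beta(w))\mid w\in H\}$ with $H\subseteq C^*$ rational and $\alpha,\beta\colon C^*\to A^*$ alphabetic, normalized so that for every $c\in C$ exactly one of $\alpha(c),\beta(c)$ is a letter and the other is $1$; similarly $R'=\{(\alpha'(x),\beta'(x))\mid x\in H'\}$ with $H'\subseteq D^*$ rational and $\alpha',\beta'\colon D^*\to A^*$ normalized in the same way. By the definition of composition, a pair $(u,v)$ lies in $R\circ R'$ precisely when there are $x\in H'$ and $w\in H$ with $u=\alpha'(x)$, $v=\beta(w)$ and the middle condition $\beta'(x)=\alpha(w)$.

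The heart of the argument is to encode $x$, $w$ and the matching of their middle letters by a single word over an auxiliary alphabet $E$. I would take $E$ to consist of the ``left'' letters $d\in D$ with $\beta'(d)=1$, the ``right'' letters $c\in C$ with $\alpha(c)=1$, and the ``synchronizing'' pairs $(d,c)\in D\times C$ with $\beta'(d)=\alpha(c)\neq 1$. Define projections $\pi_D\colon E^*\to D^*$ and $\pi_C\colon E^*\to C^*$ that read off the $D$- and $C$-components (sending a letter of the wrong type to $1$), and output morphisms $\gamma=\alpha'\circ\pi_D$ and $\delta=\beta\circ\pi_C$ from $E^*$ to $A^*$. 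Then set
\[
G=\pi_D^{-1}(H')\cap\pi_C^{-1}(H).
\]
Since the inverse image of a rational (equivalently, recognizable) language under a morphism of free monoids is recognizable, hence rational by Kleene's theorem, and rational languages are closed under intersection, $G$ is a rational subset of $E^*$.

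I would then show $R\circ R'=\{(\gamma(e),\delta(e))\mid e\in G\}$, which is the image of the rational set $G$ under the morphism $e\mapsto(\gamma(e),\delta(e))$ into $A^*\times A^*$, hence a rational relation, proving the theorem. For $\supseteq$, any $e\in G$ yields $x=\pi_D(e)\in H'$ and $w=\pi_C(e)\in H$; because each synchronizing letter carries a common middle letter $\beta'(d)=\alpha(c)$ and these occur in the same order in $e$, the words $\beta'(x)$ and $\alpha(w)$ both equal the concatenation of these middle letters, so the middle condition holds and $(\gamma(e),\delta(e))=(\alpha'(x),\beta(w))\in R\circ R'$. For $\subseteq$, given a witnessing pair with $\beta'(x)=\alpha(w)=a_1\cdots a_k$, I would factor $x=x_0d_1x_1\cdots d_kx_k$ and $w=w_0c_1w_1\cdots c_kw_k$ along the letters realizing the $a_i$ and interleave them as $e=x_0w_0(d_1,c_1)x_1w_1\cdots(d_k,c_k)x_kw_k$, so that $\pi_D(e)=x$, $\pi_C(e)=w$, whence $e\in G$ and $(\gamma(e),\delta(e))=(u,v)$.

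The step I expect to require the most care is this last equality, and in particular the interleaving construction for the inclusion $\subseteq$: one must check that the normalization supplied by Nivat's theorem makes the factorizations of $x$ and $w$ along their common middle unambiguous and compatible, so that a single word over $E$ simultaneously recovers both factors and records their synchronization. Everything else --- the closure of rational sets under intersection and homomorphic image, and the passage between recognizable and rational subsets of a free monoid --- is standard and has been recalled above.
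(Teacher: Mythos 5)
The paper does not prove this statement at all: it is quoted as a known theorem of Elgot and Mezei, with pointers to Eilenberg (Theorem IX.4.1) and Berstel (Theorem III.4.4) for proofs. So there is no internal argument to compare against; what you have written is a self-contained proof, and it is correct. It is essentially the classical argument: represent each relation by a Nivat bimorphism and glue the two representations along the common middle alphabet, encoding a pair of witnesses and their synchronization as a single word over an auxiliary alphabet $E$. Your two inclusions go through as claimed: for $\supseteq$, each letter of $E$ contributes equal factors to $\beta'(\pi_D(e))$ and to $\alpha(\pi_C(e))$ (both empty for a left or right letter, the common letter $\beta'(d)=\alpha(c)$ for a synchronizing pair), so the middle condition holds letter by letter; for $\subseteq$, since $\beta'$ and $\alpha$ are alphabetic, the letters of $x$ (resp.\ $w$) with nonempty $\beta'$-image (resp.\ $\alpha$-image) spell out $a_1\cdots a_k$ in order, so the factorizations $x=x_0d_1x_1\cdots d_kx_k$ and $w=w_0c_1w_1\cdots c_kw_k$ are unique and the interleaved word $e$ is well defined with $\pi_D(e)=x$, $\pi_C(e)=w$. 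The rationality of $G=\pi_D^{-1}(H')\cap\pi_C^{-1}(H)$ is correctly justified (inverse homomorphic images of recognizable sets are recognizable, Kleene's theorem over the finite alphabet $E$, closure under intersection), and the final step uses only closure of rational sets under homomorphic images, which the paper itself recalls. Two small points of hygiene, neither affecting correctness: take $C$ and $D$ disjoint (replacing them by disjoint copies if necessary) so that the three types of letters of $E$, and hence the projections $\pi_D,\pi_C$, are unambiguous; and note explicitly that $E$ is finite because $C$ and $D$ are, which is what licenses the appeal to Kleene's theorem in $E^*$.
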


Note that there is a canonical monoid embedding $(A\times A)^*\rightarrow A^*\times A^*$. Its image is 
the set of pairs $(u,v)$ such that $|u|=|v|$. Each subset composed of such pairs may be identified with a subset of $(A
\times A)^*$.
We shall use the following result given in \cite{E} Theorem IX.6.1: let $T$ be a rational relation such that if 
$(u,v)\in T$, then $u,v$ have same length (it is called {\em length-preserving}). Then $T$ is rational as subset of the monoid $(A\times A)^*$.

\section{Quasi-automatic semigroups}\label{semigroup}
In this section we introduce a new family of semigroups which we call quasi-automatic and show that
it contains previously defined families such as the rational semigroup
and  the synchronous  and the asynchronous automatic semigroups.
\subsection{Semigroups}\label{semigroup1}
Let $S$ be a semigroup. A {\em quasi-automatic semigroup structure on} 
$S$ is a $5$-tuple $(A,\mu,L,R,(R_a)_{a\in A})$, where $A$ is a finite generating set of $S$,
$\mu$ is the natural semigroup homomorphism $A^+
\rightarrow S$ (sending each $a\in A$ onto itself), where $L\subset A^+$ is a rational language, and where
$R,R_a\subset A^+\times A^+$, for each letter $a\in A$,  are rational relations such that:
\begin{enumerate}
\item $\mu(L)=S$;
\item $R=\{(u,v)\in L\times L|\mu(u)=\mu(v)\}$;
\item for each $a\in A$, $R_a=\{(u,v)\in L\times L|\mu(ua)=\mu(v)\}$.
\end{enumerate}

We say that $(A,\mu,L,R,(R_a)_{a\in A})$ is a quasi-automatic semigroup structure on $S$,  {\em with respect to} $A$.

\subsection{Monoids}
The previous definition of quasi-automatic structure on a semigroup has a natural analogue for monoids. We give it now and show that for 
monoids, the two definitions are equivalent.

Let $M$ be a monoid. A {\em quasi-automatic monoid structure
on} $M$ is is a 5-tuple $(A,\mu,L,R,(R_a)_{a\in A})$, where the finite set $A$ generates $M$ as 
monoid, where 
$\mu$ is the natural monoid homomorphism $A^*\rightarrow M$, where $L$ is
a rational language $\subset A^*$ and where $R, R_a\subset A^*\times A^*$, for each letter $a
\in A$, are rational relations, such that
\begin{enumerate}[(a)]
\item $\mu(L)=M$;
\item
$R=\{(u,v)\in L\times L|\mu(u)=\mu(v)\}$;
\item for each $a\in A$, $R_a=\{(u,v)\in L\times L|\mu(ua)=\mu(v)\}$.
\end{enumerate}

\begin{proposition} Let $M$ be a monoid.

1. If $M$ is generated as semigroup by the finite set $A$ and has a quasi-automatic semigroup structure 
with respect to $A$, then $M$ has a quasi-automatic monoid structure with respect to $A$.

2. If $M$ is generated as monoid by the finite set $A$ and has a quasi-automatic monoid structure with respect to $A$, then $M$ has a 
quasi-automatic semigroup structure with respect to $A$ or $A\cup \{1\}$.
\end{proposition}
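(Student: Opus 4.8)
The plan is to treat the two implications separately, using throughout the two elementary facts recalled in Section~\ref{rat}: a rational subset of $A^+$ (resp.\ of $A^+\times A^+$) is rational as a subset of $A^*$ (resp.\ of $A^*\times A^*$), and conversely the intersection of a rational subset of $A^*$ (resp.\ $A^*\times A^*$) with $A^+$ (resp.\ $A^+\times A^+$) is again rational.

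For part~1 there is essentially nothing to change. Since a semigroup generating set is a fortiori a monoid generating set, $A$ generates $M$ as a monoid; I extend $\mu\colon A^+\to M$ to the monoid homomorphism $A^*\to M$ by sending the empty word $1$ to the identity $1_M$. I would then check that the very same triple $(L,R,(R_a)_{a\in A})$ witnesses the monoid structure: $L\subset A^+$ is rational in $A^*$, and $R,R_a\subset A^+\times A^+$ are rational in $A^*\times A^*$, so it only remains to verify (a)--(c). These are immediate, because the two homomorphisms agree on $A^+$ and every word occurring in conditions (1)--(3) is nonempty (in particular $ua\in A^+$ for $u\in L$); hence $\mu(L)=M$ and $R$, $R_a$ already have the required descriptions.

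For part~2 the subtlety is the empty word, which can only represent $1_M$. Set $L_0=L\cap A^+$; this is rational, and since deleting the empty word from $L$ can remove at most the single value $1_M$ from the image, we always have $M\setminus\{1_M\}\subseteq\mu(L_0)$. Whether $A$ generates $M$ as a semigroup is exactly the question whether $1_M\in\mu(A^+)$, and this governs the dichotomy in the statement. If $1_M=\mu(w_0)$ for some $w_0\in A^+$, I would work over $A$ and set $L'=L_0\cup\{w_0\}$; otherwise I would adjoin a fresh generator mapped to $1_M$ (the symbol $1$ of the statement), work over $A\cup\{1\}$, and set $L'=L_0\cup\{1\}$. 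In either case $L'$ is a rational subset of $A^+$ (resp.\ $(A\cup\{1\})^+$) and $\mu(L')=\mu(L_0)\cup\{1_M\}=M$, so condition (1) holds; write $s$ for the adjoined nonempty word representing $1_M$.

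It remains to produce rational $R'$ and $R'_a$ for the new language $L'$, and this is where the real work lies. Splitting $L'\times L'$ according to whether each coordinate lies in $L_0$ or equals $s$, I would express $R'$ and each $R'_a$ as a finite union of pieces. The pieces contained in $L_0\times L_0$ are simply $R\cap(A^+\times A^+)$ and $R_a\cap(A^+\times A^+)$, which are rational because $A^+\times A^+$ is recognizable and the intersection of a rational and a recognizable subset of $A^*\times A^*$ is rational. The remaining pieces are controlled by sections: for a fixed word $w$, the sets $\{u\mid(u,w)\in R\}$ and $\{u\mid(u,w)\in R_a\}$ are rational, being the first projections of the intersections of $R$, $R_a$ with the recognizable set $A^*\times\{w\}$, and projection preserves rationality. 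The potential obstacle is that a set such as $\{u\in L_0\mid\mu(u)=1_M\}$ looks like an identity fibre, which need not be rational in a general monoid; the point is that inside $L$ it is exactly the section of $R$ at any $L$-word representing $1_M$, and such a word exists because $\mu(L)=M$. The same device realizes $\{u\in L_0\mid\mu(ua)=1_M\}$ as a section of $R_a$ and $\{v\in L_0\mid\mu(v)=a\}$ as a section of $R$, while the identities $\mu(s)=1_M$ and $\mu(sa)=a$ make the pieces touching $s$ either finite or sections of this kind; assembling them yields $R'$ and $R'_a$ rational. Finally, in the adjoined-generator case the extra relation $R'_1$ is required, but right multiplication by $1$ is the identity, so $R'_1=R'$, and the construction is complete.
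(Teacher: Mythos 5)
Your proof is correct, and while it rests on the same underlying tools as the paper's, it takes a genuinely different route. Part 1 coincides with the paper's argument: extend $\mu$ by $\mu(1)=1_M$ and reuse the same data $(L,R,(R_a)_{a\in A})$ unchanged. For part 2, the paper splits on whether $\mu(L\setminus\{1\})=M$: if so, it simply intersects $L$, $R$, $R_a$ with $A^+$, resp.\ $A^+\times A^+$; if not, it adjoins a new letter $c$, transports the entire structure through the map $f$ that substitutes $c$ for the empty word, and proves that $f$ preserves rationality via a decomposition whose nontrivial pieces are sections of $R$, $R_a$ at the empty word (e.g.\ $\{u\mid (u,1)\in T\}\times\{c\}$), together with closure under intersection with recognizable sets. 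You instead split on whether $A$ generates $M$ as a semigroup, i.e.\ whether $1_M\in\mu(A^+)$; you keep $L_0=L\cap A^+$, adjoin a single new representative $s$ of $1_M$ (a word $w_0\in A^+$ when one exists, a fresh letter otherwise), and reassemble $R'$, $R'_a$ from pieces, handling the pieces touching $s$ via sections of $R$, $R_a$ at chosen $L$-representatives of $1_M$ and of the generators. This is sound: the sections you invoke are indeed rational (intersect with a recognizable set of the form $A^*\times\{w\}$ or $\{w\}\times A^*$ and project), and $R'_1=R'$ in the adjoined-generator case is right. What your route buys is a slightly sharper conclusion: whenever $A$ semigroup-generates $M$ you obtain a structure over $A$ itself, whereas the paper falls back on the enlarged alphabet as soon as no nonempty word of $L$ represents $1_M$, even if $A$ does generate $M$ as a semigroup. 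What the paper's route buys is uniformity: the single substitution map $f$ handles $L$, $R$ and all $R_a$ at once, with sections needed only at the empty word. Both conclusions satisfy the proposition as stated.
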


\begin{proof}
1. Let $(A,\mu,L,R,(R_a)_{a\in A})$ be a quasi-automatic semigroup structure of $M$. We may extend $\mu$ to $A^*
$ by letting $\mu(1)=1$ and still denote it by $\mu$. Then clearly, (a), (b), (c) are satisfied (details are left to the reader): $(A,\mu,L,R,(R_a)_{a\in A})$ 
is a quasi-automatic monoid structure on $M$.

2. Let $(A,\mu,L,R,(R_a)_{a\in A})$ be a quasi-automatic monoid structure of $M$. Suppose first that $\mu(L
\setminus \{1\})=M$. Then $A$ generates $M$ as semigroup. Moreover, let $\mu'=\mu|A^+$, $L'=L\cap A^+$, $R'=R\cap (A^+\times A^+)$, $R'_a=R_a
\cap (A^+\times A^+)$. Then $\mu',L',R',R'_a$ satisfy (1), (2), (3) (details are left to the reader), so that they define a quasi-automatic semigroup 
structure on $M$.

Suppose on the contrary that $\mu(L\setminus \{1\})\neq M$. This implies that $1\in L$ and that $\mu(L
\setminus \{1\})=M\setminus \{1\}$. We take a new letter $c$ and the new alphabet $B=A\cup \{c\}$. Define the 
semigroup homomorphism  $\mu':B^+\rightarrow M$ by $\mu'(a)=\mu(a)$ if $a\in A$ and $\mu'(c)=1$. 

Let $f:A^*\rightarrow B^*$, $w\mapsto w$ if $w\neq 1$ and $1\mapsto c$. In other words, $f$ is the 
identity on $A^+$ and maps the empty word onto $c$. Denote also by $f$ the mapping $(u,v)\mapsto 
(f(u),f(v))$, $A^*\times A^*\rightarrow B^*\times B^*$.

Let $L'=f(L)$, $R'=f(R)$, $R'_a=f(R_a)$ if $a\in A$, and $R'_c=R'$. We show that $(A\cup\{1\},\mu',L',R',(R'_a)_{a\in A}),R'_c$ is a quasi-automatic semigroup structure on $M$. 

Equalities of points  (1), (2) and (3) of the new structure 
follow from the identity $\mu'\circ f=\mu$. 

It remains to prove that 
$f$ preserves rationality.
Indeed, since rational languages are closed under Boolean operations,
the language $f(L)=L\setminus \{1\}\cup \{c\}$ is rational.
Since $L$ is a rational language, $f(L)=L\setminus \{1\} 
\cup c$ is rational since rational languages are closed under boolean operations. If $T$ is any 
subset of $A^*\times A^*$, one has $f(T)=(T\cup \{(u,c)|(u,1)\in T\} \cup \{(c,v)|(1,v)\in T\} \cup E) 
\setminus (1\times A^*\cup A^*\times 1)$, where $E=(c,c)$ if $(1,1)\in T$ and $E=\emptyset$ otherwise.
If $T$ is rational, so is $f(T)$; indeed, the set 
$\{(u,c)|(u,1)\in T\}$ is rational, since it is $K\times c$, where $K$ is the rational language image of $1$ 
under the rational transduction whose graph is the inverse of $T$; the third set is rational for a similar reason, and $E$ is rational since finite; moreover, the last two sets are recognizable, hence their union too, as well as its 
complement, and rationality is preserved by intersection by a recognizable set.

%
\end{proof}




\begin{theorem} Suppose that a monoid $M$ has a quasi-automatic structure $(A,\mu,L,R,(R_a)_{a\in A})$. Suppose that we know a word $l_1\in L$ such that $\mu(l_1)=1$. Then it is decidable if $M$ is a group.
\end{theorem}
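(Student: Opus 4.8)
The plan is to reduce the question to a purely algebraic criterion on the generators and then to a decidable emptiness problem for rational relations. The key algebraic observation is the following: \emph{$M$ is a group if and only if every generator $a\in A$ admits a left inverse}, i.e.\ there is $m\in M$ with $ma=1$. Indeed, if each $a_i\in A$ has a left inverse $b_i$ (so $b_ia_i=1$), then any element written as $a_{i_1}\cdots a_{i_k}$ admits the left inverse $b_{i_k}\cdots b_{i_1}$; hence \emph{every} element of $M$ has a left inverse. A monoid in which every element has a left inverse is a group: if $ba=1$ and $cb=1$, then $a=(cb)a=c(ba)=c$, whence $ab=cb=1$, and together with $ba=1$ this shows $b$ is a two-sided inverse of $a$. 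The converse is trivial. I deliberately phrase the criterion with \emph{left} inverses (rather than right inverses), because a left inverse of $a$ is witnessed by $\mu(ua)=1$, i.e.\ by \emph{right} multiplication by $a$, which is exactly the information carried by the relations $R_a$ of the structure.

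Next I would turn ``$a$ has a left inverse'' into a rational emptiness test. First I compute the set $Z=\{v\in L\mid \mu(v)=1\}$ of representatives of the identity. Since a word $l_1\in L$ with $\mu(l_1)=1$ is known, $Z=\{v\mid (l_1,v)\in R\}$ is the image of the finite (hence rational) language $\{l_1\}$ under the transduction associated with $R$; using Kleene's theorem, closure of rational languages under direct homomorphic image, and the stated fact that the intersection of a recognizable and a rational subset of $A^*\times A^*$ is rational, this image is a rational language, effectively computable from $R$ and $l_1$. Now $a$ has a left inverse if and only if there is $u\in L$ with $\mu(ua)=1$, equivalently if and only if some pair of $R_a$ has its second component in $Z$, i.e.\ $R_a\cap(A^*\times Z)\neq\emptyset$. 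As $A^*\times Z$ is recognizable in $A^*\times A^*$ (both factors being recognizable), this intersection is a rational relation by the same stated closure property, and its emptiness is decidable, a relation being empty precisely when its domain, a rational language obtained by projection, is empty. Running this test for each of the finitely many generators $a\in A$, and answering ``group'' exactly when all tests succeed, yields the desired decision procedure.

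The routine steps are the effective constructions of $Z$ and of $R_a\cap(A^*\times Z)$ from automata or grammars presenting $R$ and $R_a$; these are standard manipulations of rational relations and I would not belabor them. The only point requiring genuine care, and the step I expect to be the real content, is the algebraic reduction of the first paragraph, specifically the insistence on \emph{left} inverses. Using right inverses would lead to the condition $\mu(av)=1$, that is, to \emph{left} multiplication by $a$, which is not among the given relations and is not a priori rational; the whole argument goes through smoothly only because the left/right bookkeeping is arranged so that the available right-multiplication relations $R_a$ already suffice.
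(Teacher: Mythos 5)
Your proposal is correct and follows essentially the same route as the paper: reduce being a group to left-invertibility of each generator (witnessed via the right-multiplication relations $R_a$), then decide this by intersecting $R_a$ with a recognizable set and testing emptiness of the resulting rational relation. The only cosmetic difference is that the paper tests $R_a\cap(L\times\{l_1\})\neq\emptyset$ directly, whereas you first compute the full language $Z$ of representatives of $1$ and test $R_a\cap(A^*\times Z)\neq\emptyset$ — a slightly more roundabout but equally valid variant.
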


\begin{proof}
We verify first that $a$ is left-invertible if and only if there exists $w\in L$ such that $(w,l_1)$ is in $R_a$. 
Indeed, if this holds, then $\mu(wa)=\mu(l_1)=1$, so that $a$ has the left inverse $\mu(w)$. Conversely, if  
$a$ is left-invertible, then for some $w\in L$, $\mu(wa)=1=\mu(l_1)$ and then $(w,l_1)\in R_a$.

It is decidable to know if there exists $w\in L$ such that $(w,l_1)$ is in $R_a$. Indeed, this is equivalent to
$L\times \{l_1\}\cap R_a\neq \emptyset$. This intersection is rational, in an effective way, since  $L\times 
\{l_1\}$ is recognizable (see \cite{B} Proposition 2.6). Now it is decidable to know if a rational relation is nonempty (see 
\cite{B} Proposition 8.2).

In order to conclude, note that if $M$ is a group, then each $a\in A$ is left invertible. Conversely, if this 
holds then, since $A$ generates $M$ as monoid, each element of $M$ is left invertible; this in turn implies 
that each element in $M$ is invertible; thus $M$ is a group.
\end{proof}

\subsection{Comparison with the rational semigroups of Sakarovitch}

Following \cite{Sa1, PS}, a semigroup $S$ is called {\em rational} if it has a finite generating set $A$ with the 
following properties: there exists a rational language $L\subset A^+$ such that the natural homomorphism
$\mu: A^+\rightarrow M$ induces a bijection $L\rightarrow M$ and that the function $\tau:A^+\rightarrow 
A^+$, $w\mapsto L\cap \mu^{-1}\mu(w)$ is rational (that is, its graph is a rational subset of $A^+\times A^
+$). {\em Rational monoids} are defined similarly, and the two definitions are compatible \cite{PS} p.22.

We show that if $S$ is rational, then $S$ is a quasi-automatic semigroup. Indeed, we have $\mu(L)=S$ by assumption; 
moreover, the relation $R=\{(u,v)\in L\times L|\mu(u)=\mu(v)\}$ is rational, because it is equal to $\{(u,u)|u\in L\}$, which is the image of $L$ under the diagonal homomorphism sending each letter $a$ onto $(a,a)$. 
Moreover, for $a\in A$, $R_a=\{(u,v)\in L\times L|\mu(ua)=\mu(v)\}$ is equal to $\{(u,\tau(ua))|u\in L\}$; this is equal to the intersection of $L\times A^+$ (which is recognizable) with the graph of the rational function sending $u$ onto $\tau(ua)$, which is the composition of $u\rightarrow ua$ (clearly a rational function) followed by $\tau$. Rational functions are closed under composition, by the theorem of Elgot and Mezei, and intersection with a recognizable relation preserves rationality. Hence $R_a$ is rational.

Thus (1), (2), (3) are satisfied. It follows that each rational semigroup is quasi-automatic. The converse is not true, since an infinite group cannot be rational, by \cite{Sa1} Example 4.2; but there are infinite groups that are automatic (for example $\mathbb Z$; see also \cite{ECHLPT} Theorems 3.4.1 and 3.4.5), hence quasi-automatic, as is shown in  Section \ref{autom}.

Note that Mercat introduces in \cite{M} a class of semigroups called {\em strongly automatic}. These are assumed to be embedded in groups.
He shows that strongly automatic monoids are rational (\cite{M} Proposition 3.15), and that the converse does not hold (\cite{M} Exemple 3.17).

\subsection{Comparison with automatic groups and semigroups}\label{autom}

In \cite {ECHLPT} (Definition 2.3.1) are defined {\em (synchronous) automatic groups} and in \cite{CRRT} are defined {\em automatic semigroups} extending 
the first notion to semigroups. Note that automaticity for semigroups depends on the choice of generators, see \cite{CRRT} Example 4.5, although 
it does not depend on the choice of generators for groups, \cite{ECHLPT} Theorem 2.4.1, nor for {\em automatic monoids} in some restrictive sense (the generators must be semigroup generators) \cite{DRR}. 

Furthermore, in \cite{ECHLPT} (Definition 7.2.1) are defined {\em asynchronous automatic groups} (which is independent of the set of 
generators Theorem 7.3.3) and in \cite{WWD} this notion is extended to semigroups (Definition 2.3). The class of asynchronous semigroups is 
strictly larger than the class of automatic semigroups, as follows from the example given in
\cite{ECHLPT} (Example 7.4.1). 

We show that synchronous and asynchronous automatic semigroups are quasi-automatic. For this we must define synchronous and asynchronous automata.

\subsubsection{Synchronous}
Synchronous automata have already been considered in \cite{EES}. Let $\$$ be a new symbol. For $(u,v)$ in $A^*\times A^*$, define $\delta(u,v)=(u\$^i,v\$^j)\in (A\cup \$)^*
$, where the natural integers $i,j$ are chosen to be the smallest possible so that $u\$^i,v\$^j$ have the 
same length. Then an automatic structure is defined by a rational language $L\subset A^+$ such that $\mu(L)=S$; moreover, $R,R_a$ being defined as in Section \ref{semigroup1}, one asks 
that the 
sets $\delta(R)$ and $\delta(R_a)$, which may be identified with subsets of $((A\cup \$)\times(A\cup \$))^*$ (see Section \ref{rat}, last paragraph, for this identification), be rational subsets of this free monoid. 

Since erasing a symbol 
preserves rationality (it is performed by a homomorphism), an automatic structure is also a quasi-
automatic structure. 

It follows that synchronous automatic semigroups are quasi-automatic semigroups.

\subsubsection{Asynchronous}
For an asynchronous automatic structure, one considers two-tape automata on the alphabet $A\cup \{\$\}$, which have a double determinism: 
in any state, the automaton can read only on one of the tapes, 
depending on the state; moreover, for each letter there is at most one transition with this letter. These automata are called {\em deterministic 2-tape automata}; see \cite{RS, FR}.

Relations that are recognized by such automata are rational relations. Indeed, these automata are special cases of transducers, and the latter recognize rational relations, see \cite{B}, Theorem III.6.1. 

Now let $S$ be an asynchronous automatic semigroup. This means that $S$ is
generated by a finite set $A$, and that $\mu, R,R_a$ being defined as before, the relations  
$R\$,R_a\$$ are 
recognized by such automata; here $R\$=\{(u\$,v\$)|(u,v)\in R\}$ and similarly define $R_a\$$. 

In this case, the relations $R\$,R_a\$$ are rational, as seen above. Since $R$ and $R_a$ are the image of $R\$$ and 
$R_a\$$ under the homomorphism defined by the identity on $A$ and $\$\mapsto 1$, they are rational relations.  

It follows that asynchronous automatic semigroups are quasi-automatic.

\section{Properties of quasi-automatic semigroups}

\subsection{Change of representatives}

\begin{proposition}
Let $(A,\mu,L,R,(R_a)_{a\in A})$ be a quasi-automatic semigroup structure on the semigroup $S$. Let $L'\subset L$ be a rational language such that $\mu(L')=S$.Then  $L'$ induces a quasi-automatic structure of $S$.
\end{proposition}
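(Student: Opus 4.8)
The plan is to produce new rational objects $R'$ and $R'_a$ witnessing a quasi-automatic structure for the restricted language $L'$. Since $L'\subset L$ and $\mu(L')=S$, condition (1) is immediate. For conditions (2) and (3), the natural candidates are the restrictions $R'=R\cap(L'\times L')$ and $R'_a=R_a\cap(L'\times L')$, and one checks directly from the definitions that these are exactly $\{(u,v)\in L'\times L'\mid \mu(u)=\mu(v)\}$ and $\{(u,v)\in L'\times L'\mid \mu(ua)=\mu(v)\}$ respectively. So the only real content is to show that these intersections are again rational relations.

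The key step is therefore to argue that intersecting a rational relation with a set of the form $L'\times L'$ preserves rationality. First I would recall from Section~\ref{rat} that the product $L'\times L'$ of two rational languages is a \emph{recognizable} subset of $A^*\times A^*$: indeed, if $L'$ is recognized by a homomorphism $\varphi\colon A^*\to F$ into a finite monoid $F$, then $L'\times L'$ is recognized by the homomorphism $A^*\times A^*\to F\times F$ into the finite monoid $F\times F$, as the preimage of $\varphi(L')\times\varphi(L')$. Having established recognizability, I would then invoke the result quoted in Section~\ref{rat} (from \cite{B} Proposition III.2.6) that in a finitely generated monoid the intersection of a rational subset and a recognizable subset is rational. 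Applying this to $R$ and $R_a$ on one side and $L'\times L'$ on the other yields that $R'$ and $R'_a$ are rational relations.

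With rationality in hand, the verification that $R'$ and $R'_a$ satisfy (2) and (3) is a short set-theoretic computation: $R'=R\cap(L'\times L')=\{(u,v)\in L\times L\mid \mu(u)=\mu(v)\}\cap(L'\times L')$, and since $L'\subset L$ this collapses to $\{(u,v)\in L'\times L'\mid \mu(u)=\mu(v)\}$, which is exactly what (2) demands for the new structure; the argument for $R'_a$ is identical with $\mu(ua)$ in place of $\mu(u)$. Thus $(A,\mu,L',R',(R'_a)_{a\in A})$ is a quasi-automatic semigroup structure on $S$.

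I do not expect a serious obstacle here, since every ingredient is already assembled in the excerpt. The one point deserving care is the recognizability claim for $L'\times L'$: it is tempting to say ``the product of two rational sets is rational'' and stop, but rationality of the product alone does not give closure under intersection, so it is essential to upgrade to recognizability via the product-of-finite-monoids construction before appealing to the rational-meets-recognizable closure property. Everything else is routine unwinding of definitions.
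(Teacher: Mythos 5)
Your proposal is correct and follows essentially the same route as the paper: observe that $L'\times L'$ is a recognizable subset of $A^+\times A^+$ and then use closure of rational relations under intersection with recognizable sets to conclude that $R\cap(L'\times L')$ and $R_a\cap(L'\times L')$ are rational. The only difference is cosmetic --- the paper cites Berstel (Theorem III.1.5) for the recognizability of $L'\times L'$, whereas you prove it directly via the product-of-finite-monoids construction, which is a perfectly valid (and self-contained) substitute.
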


\begin{proof}
The set $L'\times L'$ is a recognizable subset of $A^+\times A^+$, see \cite{B} Theorem III.1.5. It follows that its intersection with $R$ and 
with each $R_a$ is a rational subset of $A^+\times A^+$. This implies the result.
\end{proof}
 
\subsection{Change of generators}
 
\begin{theorem} \label{gen} Let $(A,\mu,L,R,(R_a)_{a\in A})$ be a quasi-automatic structure on the semigroup $S$. If $B$ is another finite set of generators of $S$, then there 
exists a quasi-automatic structure on $S$ with respect to $B$.
\end{theorem}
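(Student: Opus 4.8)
The plan is to transport the given $A$-structure to $B$ by means of two translation morphisms, the essential technical point being that right multiplication by an arbitrary \emph{word} (not just a generator) still yields a rational relation.

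First I would fix, for each $a\in A$, a word $\psi(a)\in B^+$ with $\nu(\psi(a))=a$ (possible since $B$ generates $S$), and extend $\psi$ to a semigroup homomorphism $\psi\colon A^+\to B^+$; since the two homomorphisms agree on generators, $\nu\circ\psi=\mu$. Symmetrically, for each generator $b\in B$ I would fix a word $\phi(b)\in A^+$ with $\mu(\phi(b))=b$ (possible since $A$ generates $S$; here $\phi$ need only be a choice of representative word, not a homomorphism). I then set $L'=\psi(L)\subset B^+$. Because rationality is preserved under direct image by a semigroup homomorphism, $L'$ is rational, and $\nu(L')=\nu(\psi(L))=\mu(L)=S$, so condition (1) holds for the candidate $B$-structure.

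The key step, which I expect to be the crux of the argument, is to show that for \emph{every} word $w\in A^+$ the relation $R_w=\{(u,v)\in L\times L\mid \mu(uw)=\mu(v)\}$ is rational. When $w$ is a single letter this is one of the given relations $R_a$. For the inductive step I would write $R_{wa}=R_a\circ R_w$ (in the composition convention of the excerpt): if $(u,m)\in R_w$ and $(m,v)\in R_a$ then $\mu(uwa)=\mu(ma)=\mu(v)$, and conversely any $(u,v)\in R_{wa}$ factors through some $m\in L$ with $\mu(m)=\mu(uw)$, such $m$ existing because $\mu(L)=S$. The Elgot--Mezei composition theorem then gives, by induction on $|w|$, that each $R_w$ is rational.

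It remains to transport the relations. Let $\psi\times\psi\colon A^+\times A^+\to B^+\times B^+$ be the product homomorphism $(u,v)\mapsto(\psi(u),\psi(v))$. I claim $R'=(\psi\times\psi)(R)$ and, for each $b\in B$, $R'_b=(\psi\times\psi)(R_{\phi(b)})$. Using $\nu\circ\psi=\mu$, for $x=\psi(u)$ and $y=\psi(v)$ one has $\nu(x)=\mu(u)$, $\nu(y)=\mu(v)$, and $\nu(xb)=\mu(u)\,\mu(\phi(b))=\mu(u\phi(b))$, so that $(x,y)\in R'$ translates exactly into $(u,v)\in R$ and $(x,y)\in R'_b$ into $(u,v)\in R_{\phi(b)}$; the only slightly delicate point is the reverse inclusion, which relies on the fact that every element of $L'=\psi(L)$ admits a $\psi$-preimage in $L$. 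Since rationality is preserved by homomorphic image, $R'$ and all the $R'_b$ are rational, giving conditions (2) and (3). Hence $(B,\nu,L',R',(R'_b)_{b\in B})$ is a quasi-automatic structure on $S$ with respect to $B$.
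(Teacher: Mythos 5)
Your proposal is correct and follows essentially the same route as the paper's own proof: a translation homomorphism $\psi$ (the paper's $\alpha$) with $\nu\circ\psi=\mu$, the key lemma that $R_w$ is rational for every word $w$ via composition of relations and Elgot--Mezei, and the identification of the transported relations as homomorphic images $(\psi\times\psi)(R)$ and $(\psi\times\psi)(R_{\phi(b)})$, including the surjectivity argument needed for the reverse inclusions. The only cosmetic difference is that the paper proves $R_{xy}=R_y\circ R_x$ for arbitrary words $x,y$ while you peel off one letter at a time, which is the same induction.
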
 

The theorem allows us to say that a semigroup is {\em quasi-automatic}: this definition depends only on 
$S$, and not on the chosen 
generating set. 

With the previous notations, let $w\in A^*$. Define $R_w=\{(u,v)\in L\times L|\mu(uw)=\mu(v)\}$. This 
notation is consistent with the notation $R_a$ when $w=a$. Moreover $R_1=R$.

\begin{lemma} $R_w\subset A^+\times A^+$ is a rational relation.
\end{lemma}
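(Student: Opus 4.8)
The plan is to prove the lemma by induction on the length of $w$, using the composition theorem of Elgot and Mezei together with the surjectivity hypothesis $\mu(L)=S$. The base case is $w=1$: then $R_1=R$ is rational by hypothesis (2) of the quasi-automatic structure. For the inductive step I would write $w=w'a$ with $w'\in A^*$ and $a\in A$, assume $R_{w'}$ rational, and reduce $R_w$ to a composition of the already-known relations $R_{w'}$ and $R_a$.

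The key identity I would establish is $R_{w'a}=R_a\circ R_{w'}$, where $\circ$ is composition in the sense fixed earlier in the paper, namely $R_a\circ R_{w'}=\{(u,v)\mid \exists m,\ (u,m)\in R_{w'},\ (m,v)\in R_a\}$. The inclusion $\supseteq$ is immediate: if $(u,m)\in R_{w'}$ and $(m,v)\in R_a$ then $u,v\in L$, $\mu(uw')=\mu(m)$ and $\mu(ma)=\mu(v)$, whence $\mu(uw'a)=\mu(ma)=\mu(v)$, so $(u,v)\in R_{w'a}$. The reverse inclusion is the place where the hypothesis $\mu(L)=S$ is essential: given $(u,v)\in R_{w'a}$, I choose a representative $m\in L$ with $\mu(m)=\mu(uw')$, which exists precisely because $\mu(L)=S$; then $(u,m)\in R_{w'}$ by construction, and $\mu(ma)=\mu(uw'a)=\mu(v)$ gives $(m,v)\in R_a$, so $(u,v)\in R_a\circ R_{w'}$.

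Once the identity is in place, rationality of $R_w$ follows from the Elgot--Mezei theorem, since $R_{w'}$ (inductive hypothesis) and $R_a$ (hypothesis (3)) are rational. A minor bookkeeping point is to confirm that $R_a\circ R_{w'}$ genuinely sits inside $A^+\times A^+$: this holds because the first coordinate of any pair in the composition lies in the domain of $R_{w'}\subseteq A^+\times A^+$ and the second lies in the range of $R_a\subseteq A^+\times A^+$; should one prefer to apply Elgot--Mezei over $A^*\times A^*$, one intersects the resulting rational relation with $A^+\times A^+$, which preserves rationality by the observation recalled in Section~\ref{rat}.

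I expect the only real subtlety to be the backward inclusion in the composition identity, and specifically the explicit invocation of $\mu(L)=S$ to produce the intermediate representative $m\in L$; the rest is a routine unwinding of the definitions together with closure of rational relations under composition. It is worth noting that the same argument, taken with $w'=1$, recovers the consistency check $R_a=R_a\circ R$, since $(u,u)\in R$ for every $u\in L$.
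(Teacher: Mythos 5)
Your proof is correct and follows essentially the same route as the paper: both establish the composition identity (the paper proves $R_{xy}=R_y\circ R_x$ for arbitrary $x,y\in A^+$, you prove the letter-at-a-time case $R_{w'a}=R_a\circ R_{w'}$), both invoke hypothesis (1), $\mu(L)=S$, at exactly the same spot to produce the intermediate representative $m\in L$, and both conclude by the Elgot--Mezei theorem and induction on $|w|$.
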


\begin{proof} This is clear when $|w|=0$ or $1$. We show that for any words $x,y\in A^+$, $R_{xy}$ is the composition 
of the relations $R_y$ and $R_x$. By the theorem of Elgot and Mezei, this will imply that any $R_w$ is rational, by 
induction on the length of $w$.

Let $(u,v)\in R_{xy}$. There exists by (1) a word $m$ in $L$ such that $\mu(ux)=\mu(m)$. Then $(u,m)\in 
R_{x}$. Moreover $(m,v)\in R_y$, since $\mu(my)=\mu(m)\mu(y)=\mu(ux)\mu(y)=\mu(uxy)=\mu(v)$. Thus 
$(u,m)\in R_x,(m,v)\in R_y$, which implies that $(u,v)\in R_y\circ R_x$.

Conversely, let $(u,v)\in R_y\circ R_x$. There exists $m\in A^+$ such that $(u,m)\in R_x$ and $(m,v)\in R_y$. 
Then $\mu(uxy)=\mu(ux)\mu(y)=\mu(m)\mu(y)=\mu(my)=\mu(v)$, which shows that $(u,v)
\in R_{xy}$.
\end{proof}

\begin{proof} [Proof of Theorem \ref{gen}]
Consider the natural homomorphism $\nu:B^+\rightarrow S$, which is the identity on $B$; it is surjective. 

Each $a\in A$ is a product in $S$ of elements of $B$; we may therefore define a homomorphism $\alpha:A^+\rightarrow B^+$ such that $\alpha(a)=b_1\cdots b_n$, $b_i\in A$, where $a=b_1\cdots b_n$ in $S$. We then have $\mu=\nu\circ \alpha$. 

Define $K=\alpha(L)$. It is a rational language in $B^+$. We have $\nu(K)=\nu\circ\alpha(L)=\mu(L)=S$.

Let $T=\{(u,v)\in K\times K|\nu(u)=\nu(v)\}$. We show that $T=(\alpha\times\alpha)(R)$ (where $(\alpha\times\alpha)(x,y)=(\alpha(x),\alpha(y))$). Let $(u,v)\in T$; 
then $u,v\in K$, hence there exist $x,y\in L$ such that $u=\alpha(x),v=\alpha(y)$; moreover, $\nu(u)=
\nu(v)$, hence $\mu(x)=\nu\alpha(x)=\nu(u)=\nu(v)=\nu\alpha(y)=\mu(y)$ and therefore $(x,y)\in R$, so 
that $(u,v)\in (\alpha\times\alpha)(R)$. Conversely, if $(u,v)\in (\alpha\times\alpha)(R)$, then $u=
\alpha(x),v=\alpha(y)$, $x,y\in L$ and $\mu(x)=\mu(y)$; thus $u,v\in K$ and $\nu(u)=\nu\alpha(x)=\mu(x)=
\mu(y)=\nu\alpha(y)=\nu(v)$, so that $(u,v)\in T$.

Let $b\in B$ and $T_b=\{(u,v)\in K\times K|\nu(ub)=\nu(v)\}$. We show that $T_b=(\alpha\times\alpha)
(R_w)$, where $w\in A^+$ has been chosen in such a way that $\nu (b)=\mu (w)$ ($\mu$ is surjective). Let 
$(u,v)\in T_b$; 
then $u,v\in K$, hence there exist $x,y\in L$ such that $u=\alpha(x),v=\alpha(y)$; moreover, $\nu(ub)=
\nu(v)$, hence $\mu(xw)=\mu(x)\mu(w)=\nu\alpha(x)\nu(b)=\nu(u)\nu(b)=\nu(ub)=\nu(v)=\nu\alpha(y)=
\mu(y)$ and therefore $(x,y)\in R_w$, so that $(u,v)\in (\alpha\times\alpha)(R_w)$. Conversely, if $(u,v)\in 
(\alpha\times\alpha)(R_w)$, then $u=
\alpha(x),v=\alpha(y)$, $x,y\in L$ and $\mu(xw)=\mu(y)$; thus $u,v\in K$ and $\nu(ub)=\nu\alpha(x)
\nu(b)=
\mu(x)\mu(w)=\mu(xw)=
\mu(y)=\nu\alpha(y)=\nu(v)$, so that $(u,v)\in T_b$.

Since $\alpha\times\alpha$ is a homomorphism, it preserves rationality and $T,T_b$ are therefore rational. Thus (1), (2) and (3) are proved and there exists a quasi-automatic structure on $S$ with respect to $B$.
\end{proof}

\subsection{Computing representatives}

The term ``representative''  suggests 
the choice of a unique element in an equivalence class.
This is not quite the meaning here. The idea is, given
an arbitrary word  $u\in A^{*}$  to  associate 
a word  $v\in L$ 
with the same image:  $\mu(u)=\mu(v)$. If $\mu$ does not map $L$
on $\mu(L)$ bijectively, uniqueness of such a word $v$ is not guaranteed. 
Thus, in our context, since the computations in the semigroup
(monoid or group) are done via $L$,  by  ``representative'' of an arbitrary word, 
we mean a word in
$L$ that has the same image by $\mu$.

\begin{theorem}\label{repr}
Let $(A,\mu,L,R,(R_a)_{a\in A})$ be a quasi-automatic semigroup structure on the semigroup $S$. There exists a function $l:A^+
\rightarrow L$ such that for any word $u\in A^+$ and any letter $a\in A$, $\mu(l(u))=\mu(u)$,  $\mu(l(u)a)=
\mu(l(ua))$ and 
$(l(u),l(ua))\in R_a$. Moreover, for some $N>1$, the length of $l(u)$ is $\leq N^{|u|}$ and $l(u)$ may be computed in exponential time with respect to the length of $u$.
\end{theorem}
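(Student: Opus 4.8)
The plan is to construct the function $l$ recursively along the letters of $u$, using the relations $R_a$ together with the tool from the composition theory that every rational relation contains a rational \emph{function} with the same domain. Concretely, for each $a\in A$ the relation $R_a\subset A^+\times A^+$ has domain $L$ (by condition (1), every $u\in L$ has some $v\in L$ with $\mu(ua)=\mu(v)$), so by the cited Eilenberg--Elgot--Mezei result there is a rational function $\varphi_a\subset R_a$ with the same domain $L$; I would fix one such $\varphi_a$ for each $a$. Then, choosing once and for all a representative $l(a_0)\in L$ of the image of the first letter $a_0$ of $u$, I would define $l(u)$ for $u=a_0a_1\cdots a_k$ by $l(a_0a_1\cdots a_i)=\varphi_{a_i}\bigl(l(a_0a_1\cdots a_{i-1})\bigr)$, reading the word left to right. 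Since $\varphi_a\subset R_a$, the pair $\bigl(l(u),l(ua)\bigr)$ lies in $R_a$, which immediately gives $\mu(l(u)a)=\mu(l(ua))$ by condition (3); and an easy induction on $|u|$ (using $\mu(l(u))=\mu(u)$ as the inductive hypothesis, the base case being the choice of $l(a_0)$) yields $\mu(l(u))=\mu(u)$ for all $u$.

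Having established correctness, the remaining work is the complexity bound, and this is where the effective Arnold--Latteux construction mentioned in the introduction becomes essential. The point is that each rational function $\varphi_a$ may be realized effectively as a composition of a left sequential and a right sequential function; a sequential (subsequential) transducer reads its input once and outputs, per input letter, a bounded-length word, so there is a constant $N_a$ with $|\varphi_a(w)|\leq N_a\,|w|+N_a$ for every $w$ in its domain. Taking $N$ larger than all the $N_a$ and larger than $|l(a_0)|$, a straightforward induction shows that the $i$-th representative satisfies $|l(a_0\cdots a_i)|\leq N\,|l(a_0\cdots a_{i-1})|+N$, and iterating this affine recurrence $|u|$ times produces a bound of the form $|l(u)|\leq C\,N^{|u|}$, which after adjusting the base $N$ upward gives $|l(u)|\leq N^{|u|}$ as claimed. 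For the running time, computing a single application of a sequential transducer to an input of length $\ell$ costs time linear in $\ell$ (up to the fixed transducer size), so the total cost of the $|u|$ applications is dominated by the cost of the last one, whose input has length $O(N^{|u|})$; hence the whole computation runs in time exponential in $|u|$.

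The main obstacle I anticipate is making the selection of the rational functions $\varphi_a$ genuinely effective and uniformly controlled, rather than merely existential. Invoking the abstract fact ``every rational relation contains a rational function with the same domain'' is not enough on its own: I need an actual transducer for $\varphi_a$, with an explicit size, in order to extract the linear-expansion constant $N_a$ and to bound the per-step running time. This is precisely the role of the Arnold--Latteux effective version cited in the introduction, which decomposes a given rational relation into left- and right-sequential pieces by an explicit procedure; the delicate part is checking that this decomposition indeed yields the linear output-length bound per input symbol and that composing the left and right sequential functions does not destroy it. Once those two sequential functions are in hand with their length-expansion constants, the affine recurrence and its exponential solution are routine, and the two claimed bounds — exponential length and exponential computation time — follow together.
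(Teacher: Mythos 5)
Your proposal is correct and follows essentially the same route as the paper's proof: extract a rational function inside each $R_a$ with domain $L$ (Eilenberg's cross-section theorem, made effective via Arnold--Latteux as a composition of a left and a right sequential function), iterate these functions letter by letter starting from a chosen representative of the first letter, and use the linear length-expansion and linear computation time of sequential functions to get the exponential bounds by an affine recurrence. The only cosmetic differences are notational (the paper writes the recursion as $l(ua)=\tau_a(l(u))$ with a reversed indexing of the letters, and bounds the total time by the geometric sum $C\frac{N^{|u|}-1}{N-1}$ rather than by the cost of the last step).
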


\begin{proof}
Let $\tau_a$ be a rational function $A^*\rightarrow A^*$ such that $(u,\tau_a(u)) \in R_a$ for any word $u$. Such a function exists by Eilenberg's cross-section theorem, \cite{E} Proposition IX.8.2. By a theorem of Elgot and Mezei, each rational function is the 
product of a left and of a right subsequential function (see also \cite{B} Theorem 5.2). Note that one may 
use also the theorem in \cite{AL}, that shows directly that each rational transduction contains a function, 
with the same domain, and which is the composition of a left and of a right sequential function. 
Furthermore, this result is effective in the sense that it actually constructs the two sequential functions.

The image of a word $u$ by a sequential function may be computed in linear time in $|u|$, and its length 
is not more than linear in $|u|$. This follows since such a function is computed by a deterministic automata with 
output.

Thus we may find $N>1, C>0$ such that $\forall a\in A, \forall u\in A^+$, $|\tau_a(u)|\leq N|u|$ and the 
computing time of $\tau_a(u)$ is $\leq C|u|$.

There exist words $l(a)\in L,a\in A$, such that $\mu(a)=\mu(l(a))$, and we may assume that $|l(a)|\leq N$.

We define $l(u)=\tau_{a_1}\tau_{a_2}\cdots\tau_{a_{n-1}}(l(a_n))$ for any word $u=a_n\cdots a_2a_1$, 
$a_i\in A$. 

By construction, we have $l(ua)=\tau_a(l(u))$. Hence $(l(u),l(ua))\in R_a$. This implies that  $\mu(l(u)a)=
\mu(l(ua))$. 

The length of $l(u)$ is clearly $\leq N^n$. 

Denote by $t(u)$ the time needed to compute $l(u)$. We show that it is $\leq C\frac{N^{|u|}-1}{N-1}$, 
which is exponential. This is true for $|u|=1$ since $t(a)=0$. Assume that the inequality is true for $u$. 
Then $t(ua)$ is $\leq$ the time to compute $l(u)$, plus the time needed to compute $l(ua)=\tau_a(l(u))$ 
from $l(u)$ (which is $\leq C|l(u)|
$); thus $t(ua)\leq C\frac{N^{|u|}-1}{N-1}+CN^{|u|}=C\frac{N^{|ua|}-1}{N-1}$.

To show that $l(u)$ is a representative of $u$ we proceed by induction. By construction, $l(a)\in L$ and $\mu(a)=\mu(l(a))$. Now assume $l(u)\in L$ and $\mu(u)=\mu(l(u))$.  Since $(l(u),l(ua))\in R_a$, we have $l(ua)\in L$ and since $\mu(l(u)a)=\mu(l(ua))$, 
we have $\mu(l(ua))=\mu(l(u))\mu(a)=\mu(u)\mu(a)=\mu(ua)$.
\end{proof}

\subsection{Presentation}

Recall that a semigroup $S$ is {\em rationally presented} if it has a finite generating set $A$ and a 
presentation $\langle A,R\rangle$ where $R$ is a rational subset of $A ^+\times A^+$. 

\begin{theorem}\label{present}
If $S$ is a quasi-automatic semigroup, then it is rationally presented. 
\end{theorem}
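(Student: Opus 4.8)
The plan is to exhibit a rational relation $P \subseteq A^+ \times A^+$ whose generated congruence on $A^+$ is exactly $\ker\mu := \{(x,y) \in A^+ \times A^+ : \mu(x) = \mu(y)\}$. Since $A$ generates $S$, the homomorphism $\mu$ is onto, so $S \cong A^+/\ker\mu$; establishing that $\langle A, P\rangle$ presents this congruence then shows $S$ is rationally presented. Throughout I will use the representative function $l : A^+ \to L$ supplied by Theorem \ref{repr}.

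For each $a \in A$ I set $P_a = \{(ua, v) : (u,v) \in R_a\}$, which is the product $R_a \cdot \{(a,1)\}$ taken in the monoid $A^* \times A^*$. Since $R_a$ is rational and $\{(a,1)\}$ is finite, this product is rational in $A^* \times A^*$; as it is contained in $A^+ \times A^+$, it is rational there by the closure properties recalled in Section \ref{rat}. I also set $P_{\mathrm{init}} = \{(a, l(a)) : a \in A\}$, which is finite, and take
\[
P \;=\; R \;\cup\; P_{\mathrm{init}} \;\cup\; \bigcup_{a \in A} P_a,
\]
a finite union of rational relations, hence rational. Every pair of $P$ lies in $\ker\mu$: for $R$ by its definition, for $(a,l(a))$ because $\mu(a) = \mu(l(a))$, and for $(ua,v) \in P_a$ because $\mu(ua) = \mu(v)$ by definition of $R_a$. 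Writing $\rho$ for the congruence generated by $P$, this immediately gives $\rho \subseteq \ker\mu$.

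The heart of the argument is the reverse inclusion, which I would deduce from the claim that $w \mathrel{\rho} l(w)$ for every $w \in A^+$, proved by induction on $|w|$. The base case $w = a$ is precisely the relation $(a, l(a)) \in P_{\mathrm{init}}$. For the inductive step I write $w = ua$ with $a \in A$; Theorem \ref{repr} gives $(l(u), l(ua)) \in R_a$, hence $(l(u)a,\, l(ua)) \in P_a$ and so $l(u)a \mathrel{\rho} l(ua)$. Because $\rho$ is a congruence, the induction hypothesis $u \mathrel{\rho} l(u)$ yields $ua \mathrel{\rho} l(u)a$, and combining gives $ua \mathrel{\rho} l(u)a \mathrel{\rho} l(ua)$, as required. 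Granting the claim, whenever $\mu(x) = \mu(y)$ we have $l(x), l(y) \in L$ with $\mu(l(x)) = \mu(l(y))$, so $(l(x), l(y)) \in R \subseteq P$ and thus $l(x) \mathrel{\rho} l(y)$; together with $x \mathrel{\rho} l(x)$ and $y \mathrel{\rho} l(y)$ this forces $x \mathrel{\rho} y$, establishing $\ker\mu \subseteq \rho$.

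The only genuinely delicate points are the verification that $P_a$ is rational as a subset of $A^+ \times A^+$ and the confirmation that the chosen relations suffice; the latter is exactly why $P_{\mathrm{init}}$ and the shifted sets $P_a$ are needed, so that the recursion $l(ua) = \tau_a(l(u))$ underlying Theorem \ref{repr} can be mirrored one step at a time inside $\rho$. Beyond this bookkeeping between the relations $R_a$ on representatives and the presentation relations $P_a$ on arbitrary words, I expect no serious obstacle.
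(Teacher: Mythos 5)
Your proposal is correct and follows essentially the same route as the paper: the same generating relation (the paper's $T$ is your $P$, including $R$, the pairs $(a,l(a))$, and the shifted sets $R_a(a,1)$), the same rationality argument, and the same inductive claim $w\equiv l(w)$ used to reduce arbitrary pairs in $\ker\mu$ to pairs in $R$. No gaps to report.
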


\begin{proof}
Consider a quasi-automatic structure $(A,\mu, L,R,(R_a)_{a\in A})$ on $S$.

We use the function $l$ of Theorem \ref{repr}.
Consider the semigroup 
congruence $\equiv$ generated by the relations determined by the pairs in $T=\cup_{a\in A}\{(ua,v)|(u,v)\in R_a\}\cup R\cup\{(a,l(a))|a\in A\}$. Since $\{(ua,v)|(u,v)\in R_a\}=R_a(a,1)$ is rational, $T$ is a finite union of rational relations, hence is rational.

We contend that for any words $u,v\in A^+$, $u\equiv v$ if and only if $\mu(u)=\mu(v)$. This will imply the theorem.

By construction, if $(u,v)\in T$, then $\mu(u)=\mu(v)$. Since $T$ generates $\equiv$, we obtain that $u\equiv v$ implies $\mu(u)=\mu(v)$.

Conversely, let $u,v$ be such that $\mu(u)=\mu(v)$. We claim that for any word $w\in A^+$, $l(w)\equiv w
$.The claim implies that
$l(u)\equiv u$, $l(v)\equiv v$. By what we have already proved, we have $\mu(l(u))=\mu(u)$ and 
$\mu(l(v))=\mu(v)$. Thus $\mu(l(u))=\mu(l(v))$. Since both words are in $L$, we obtain by definition of $R$ that $(l(u),l(v))\in R$, hence $\in T$ and 
therefore $l(u)\equiv l(v)$. It follows from the claim that $u\equiv v$.

It remains to prove the claim. It is true if $w=a\in A$, since $(a,l(a))\in T$. Suppose now that $l(w)\equiv w$. We show that $l(wa)\equiv wa$. Since $(l(w),l(wa))\in R_a$ by Theorem \ref{repr}, we have $(l(w)a,l(wa))\in T$, and therefore $l(w)a\equiv l(wa)$. Thus $wa \equiv l(w)a \equiv  l(wa)$. This proves the claim by induction.
\end{proof}

\begin{corollary}
Each quasi-automatic semigroup $S$ has a rational presentation $<A,T>$ such that for any words $u,v\in A^+
$, $u=v$ in $S$ if and only if, for $n=|u|+|v|+1$ and for some words $w_0,w_1,\ldots,w_n$, one has 
$w_0=u$, $w_n=v$ and each $w_{i+1}$ is obtained from $w_i$ by replacing some prefix $x$ of $w_i$ by 
some word $y$, with $(x,y)$ or $(y,x)\in T$. Moreover the lengths of the words $w_i$ are exponentially bounded with respect to $n$.
\end{corollary}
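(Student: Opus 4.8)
The plan is to reuse verbatim the rational presentation $\langle A,T\rangle$ built in the proof of Theorem \ref{present}, where
$T=\bigcup_{a\in A}\{(ua,v)\mid(u,v)\in R_a\}\cup R\cup\{(a,l(a))\mid a\in A\}$
and $l$ is the representative function of Theorem \ref{repr}; its rationality has already been shown there. It then suffices to establish two facts. First, an admissible prefix-rewriting derivation preserves the image under $\mu$, which gives the implication from the existence of a derivation to $u=v$ in $S$. Second, whenever $\mu(u)=\mu(v)$ one can produce such a derivation of length \emph{exactly} $n=|u|+|v|+1$ whose intermediate words are exponentially bounded in $n$.

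For the converse direction I would note that every pair of $T$ relates words with the same image under $\mu$ (verified in the proof of Theorem \ref{present}). Hence if $w_{i+1}$ arises from $w_i=xz$ by replacing the prefix $x$ with $y$, where $(x,y)\in T$ or $(y,x)\in T$, then $\mu(w_{i+1})=\mu(y)\mu(z)=\mu(x)\mu(z)=\mu(w_i)$; chaining these equalities gives $\mu(u)=\mu(w_0)=\mu(w_n)=\mu(v)$, i.e. $u=v$ in $S$.

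The heart of the argument is the explicit derivation, which I would assemble from three blocks. Write $u=c_1\cdots c_m$. The recursion $l(c_1\cdots c_{k+1})=\tau_{c_{k+1}}(l(c_1\cdots c_k))$ of Theorem \ref{repr} yields $(l(c_1\cdots c_k),l(c_1\cdots c_{k+1}))\in R_{c_{k+1}}$, and therefore $(l(c_1\cdots c_k)\,c_{k+1},\,l(c_1\cdots c_{k+1}))\in T$. Starting from $(c_1,l(c_1))\in T$ and applying these pairs in turn as prefix replacements carries $u$ to $l(u)$ in exactly $m=|u|$ steps, the $k$-th word being $l(c_1\cdots c_k)\,c_{k+1}\cdots c_m$. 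Since $l(u),l(v)\in L$ and $\mu(l(u))=\mu(u)=\mu(v)=\mu(l(v))$, we have $(l(u),l(v))\in R\subset T$, which furnishes the single middle step $l(u)\to l(v)$ (a replacement of the whole word, itself a prefix). The same construction applied to $v$ gives a derivation from $v$ to $l(v)$ in $|v|$ steps; reading it backwards is again a legitimate prefix-rewriting, because the reverse of replacing a prefix $x$ by $y$ replaces the prefix $y$ (a prefix of the new word) by $x$, using the pair in its reverse orientation. Concatenating the three blocks gives a derivation of length $|u|+1+|v|=n$.

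Finally, every word on the $u$-block has the form $l(c_1\cdots c_k)\,c_{k+1}\cdots c_m$, of length at most $N^{k}+(m-k)\le N^{|u|}+|u|$ by the exponential bound of Theorem \ref{repr}, and the $v$-block and the middle word are bounded the same way; since $|u|,|v|\le n$, all $w_i$ have length at most $N^{n}+n$, which is exponential in $n$. I expect the only delicate point to be forcing the step count to come out to \emph{exactly} $n$: the three blocks must contribute precisely $|u|$, $1$, and $|v|$ steps, and one should observe that a step remains admissible even when it happens to be trivial (for instance if $l(c_1)=c_1$, or if $l(u)=l(v)$), since the relevant pair $(c_1,l(c_1))$, respectively $(l(u),l(v))\in R$, still lies in $T$.
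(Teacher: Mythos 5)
Your proposal is correct and follows essentially the same route as the paper's own proof: the same presentation $T$ from Theorem \ref{present}, the same three-block derivation $u \to l(u) \to l(v) \to v$ with intermediate words $l(p_i)s_i$ justified by $(l(p_i),l(p_ia_{i+1}))\in R_{a_{i+1}}$, and the same exponential length bound from Theorem \ref{repr}. Your additional remarks (that reversed steps are covered by the $(y,x)\in T$ clause, and that trivial steps remain admissible) are fine points the paper leaves implicit.
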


\begin{proof} We take the same $T$  as in the previous proof. Then the  "if" part is evident. In order to prove the "only if" part, we follow the previous proof.

We take $u=a_1\cdots a_k$, $v=b_1\cdots b_l$, ($a_i,b_j\in A$), $n=k+l+1$, $u=p_is_i$, $p_i$ of length $i$, $v=p'_js'_j$, $p'_j$ of length $j$, $w_i=l(p_i)s_i$, $i=1,\ldots,k$, $w_j=l(p'_{n-j})s'_{n-j}$, $j=k+1,\cdots,n-1$.

We have $(w_0,w_1)=(a_1s_1,l(a_1)s_1)$ and $(a_1,l(a_1))\in T$. 

Moreover, for $i=1,\ldots,k-1$, $(w_i,w_{i+1})=(l(p_i)a_{i+1}s_{i+1},l(p_ia_{i+1}),s_{i+1})$ and $(l(p_i)a_{i
+1},l(p_ia_{i+1}))\in T$ since $(l(p_i),l(p_ia_{i+1})) \in R_{a_{i+1}}$.

Note that $w_k=l(u)$ and $w_{k+1}=l(v)$. Thus $(w_k,w_{k+1})\in R\subset T$.

The rest of the argument is similar.
\end{proof}

\subsection{Word problem}

By definition, $S$ is isomorphic to the quotient 
of the free semigroup $A^{+}$ by the congruence 
generated by the pairs $(u,v)\in R$. A similar definition
holds for monoid presentations where $R\subseteq A^{*}\times A^{*}$
and 
$A^{*}$ are  substituted for $R\subseteq A^{+}\times A^{+}$ and $A^{+}$.

We recall that the word problem for a presentation consists 
of determining whether or not two words $u$ and $v$ are equivalent.

\begin{theorem}\label{word}
If $S$ is a quasi-automatic semigroup, then the word problem in $S$ is decidable in exponential time.
\end{theorem}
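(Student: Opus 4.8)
The plan is to reduce the word problem to a membership test in the fixed rational relation $R$, after replacing the two input words by representatives lying in $L$. Given $u,v\in A^+$, first compute $l(u)$ and $l(v)$ using Theorem \ref{repr}; this takes time exponential in $|u|+|v|$, and the resulting words have length at most $N^{\max(|u|,|v|)}$. By that theorem $\mu(l(u))=\mu(u)$ and $\mu(l(v))=\mu(v)$, so $\mu(u)=\mu(v)$ holds if and only if $\mu(l(u))=\mu(l(v))$. Since $l(u),l(v)\in L$, property (2) of the structure says exactly that this last equality is equivalent to $(l(u),l(v))\in R$. Hence the word problem reduces to deciding membership of a single, explicitly given pair in $R$; what remains is to bound the cost of this test.

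The key step is that, for a \emph{fixed} rational relation $R$, one can decide whether a given pair $(x,y)$ belongs to $R$ in time polynomial in $|x|+|y|$. I would obtain this from Nivat's bimorphism theorem: write $R=\{(\alpha(w),\beta(w))\mid w\in H\}$ with $H$ a rational language of some $C^*$ recognized by a fixed finite automaton $\mathcal A$ with $m$ states, and $\alpha,\beta$ alphabetic with $|w|=|\alpha(w)|+|\beta(w)|$ for all $w$. Then $(x,y)\in R$ if and only if $\mathcal A$ admits an accepting path whose label $w$ satisfies $\alpha(w)=x$ and $\beta(w)=y$. Searching for such a path is a reachability problem in the product automaton on states $(q,i,j)$, where $q$ ranges over the states of $\mathcal A$ and $0\le i\le|x|$, $0\le j\le|y|$ record how much of $x$ and of $y$ has been consumed: a letter $c$ with $\alpha(c)=a\in A,\ \beta(c)=1$ moves $(q,i,j)$ to $(q',i+1,j)$ when $q\xrightarrow{c}q'$ in $\mathcal A$ and $x_{i+1}=a$, and symmetrically for letters with $\beta(c)\in A$. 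Membership holds exactly when some accepting state $(q_f,|x|,|y|)$ is reachable from $(q_0,0,0)$. This product has $O(m\,|x|\,|y|)$ states and boundedly many edges per state, so the reachability test runs in time $O(|x|\,|y|)$ for fixed $R$.

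Combining the two steps finishes the argument: applying the polynomial-time membership test to $x=l(u)$ and $y=l(v)$, whose lengths are exponential in $|u|+|v|$, costs time polynomial in an exponential, hence exponential, in $|u|+|v|$; adding the exponential time spent computing $l(u)$ and $l(v)$ leaves the total exponential. I expect the delicate point to be the complexity bound of the middle step rather than its mere decidability. One is tempted to argue, as in the proof that one can decide whether $M$ is a group, that $\{l(u)\}\times\{l(v)\}$ is recognizable, that its intersection with $R$ is effectively rational, and that emptiness of a rational relation is decidable; but turning this into an exponential-time bound requires controlling how the size of the transducer recognizing the intersection grows with $|l(u)|$ and $|l(v)|$, which is precisely what the explicit product construction above makes transparent.
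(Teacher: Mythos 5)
Your proposal is correct and follows the same route as the paper: compute representatives $l(u),l(v)$ via Theorem \ref{repr}, observe that $u=v$ in $S$ if and only if $(l(u),l(v))\in R$ by property (2), and then test membership of this single pair in the fixed rational relation $R$, with the quadratic-in-exponential cost analysis giving the overall exponential bound. The only difference is the middle step: the paper simply cites van Leeuwen--Nivat (\cite{LN} Theorem 3.3) for the quadratic-time membership test, whereas you prove it from scratch via Nivat's bimorphism theorem and reachability in the product automaton on states $(q,i,j)$; your construction is correct (the normalization $\alpha(c)=1\Leftrightarrow\beta(c)\neq 1$ guarantees each transition advances exactly one of the two indices, so the graph has $O(m\,|x|\,|y|)$ states and the search is quadratic for fixed $R$), and it makes self-contained exactly the lemma the paper takes as a black box.
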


\begin{proof} 

The algorithm for the word problem goes as follows: let $u,v$ be two words; compute $l(u)$ and $l(v)$; check if $(l(u),l(v))$ is in $R$. If 
yes, then $u=v$ in $S$; if no, $u\neq v$ in $S$.

Regarding complexity, we may by Theorem \ref{repr} compute $l(u)$ and $l(v)$ in exponential time with 
respect to $n=max(|u|,|v|)$. Moreover their lengths are at most exponential in $n$. In order to conclude, we apply the following result: given a rational relation $T$, and two words $x,y$, one may check if $(x,y)\in T$, in quadratic time with respect to $max(|x|,|y|)$, see \cite{LN} Theorem 3.3.
\end{proof}

\subsection{Weak Lipschitz property}\label{Lip}

Let $S$ be a semigroup with generating set $A$. The {\em distance} between two elements in $S$ is the 
distance between them in the corresponding Cayley graph, viewed as an undirected graph.
Moreover, let $\mu$ as before and $L\subset A^+$ some language satisfying $\mu(L)=S$. Suppose that 
for some $P$, and for any words $u,v$ in $L$, such that the distance of $\mu(u)$ and $\mu(v)$ is at most 
1, one 
has: there exist $n$ and $a_1,\ldots,a_n,b_1,\ldots,b_n \in A\cup \{1\}$, such that in 
$A^*$ one has:
\begin{itemize}
\item $a_i=1\Leftrightarrow b_i\neq 1$;
\item $u=a_1\cdots a_n$, $v=b_1\cdots b_n$;
\item for any 
$i=0,\ldots,n$, the distance between $\mu(a_1...a_i)$ and $\mu(b_1\cdots b_i)$ is at most $P$.
\end{itemize}

In this case, we say that the triple $(S,A,L)$ has the {\em weak Lipschitz property}. 

The first condition is useful for the proof and for applications. It is however not essential: it may be skipped and then the new property is 
equivalent to the previous one. 

The weak Lipschitz property 
implies the {\em undirected asynchronous fellow traveler's property} of \cite{WWD}, Definition 2.7. 

The weak Lipschitz property is a  weak form of the {\em Lipschitz property} (\cite{ECHLPT} Lemma 2.3.2), 
or {\em fellow traveler 
property} (\cite{CRRT} Definition 3.11): in the latter, the prefixes of the same length of $u$ and $v$ are at 
bounded distance in $S$. 

Note that if $S$ has a zero, then the Lipschitz property (weak or not) is vacuous, since, as observed in 
\cite{CRRT} p. 375, the distance between any two elements in $S$ is bounded: it is at most twice the length of the zero. This implies that in 
general, the converse of Theorem \ref{fellow} does not hold, see \cite{CRRT} p. 375. However for groups, automacity is 
equivalent to the Lipschitz property, see \cite{ECHLPT} Theorem 2.3.5. Also, for automatic semigroups, 
there is a geometric characterization, see \cite{HT} Theorem 4.8; see also \cite{SS} for a geometric 
characterization of a stronger version of automatic monoids.

\begin{theorem}\label{fellow}
Let $S$ be a semigroup with a finite generating set $A$.
Let $(A,\mu, L,R,(R_a)_{a\in A})$ be a quasi-automatic structure on $S$. 
Then $(S,A,L)$ has the weak Lipschitz property.
\end{theorem}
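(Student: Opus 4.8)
The plan is to identify the set of pairs at distance at most $1$ with an explicit rational relation, feed it to Nivat's bimorphism theorem to obtain the required factorization, and then extract the prefix bound from the finiteness of an automaton recognizing the underlying language. First I would set
$T = R \cup \bigcup_{a\in A} R_a \cup \bigcup_{a\in A} R_a^{-1}$.
Since rational relations are closed under inverse (swap the two components of a bimorphism, as is already used in the proof of Proposition 2.1) and under finite union, $T$ is a rational subset of $A^+\times A^+$. Unwinding the definitions, $(u,v)\in R$ means $\mu(u)=\mu(v)$, while $(u,v)\in R_a$ means $\mu(v)=\mu(u)\mu(a)$, so that an edge of the Cayley graph joins $\mu(u)$ and $\mu(v)$; together with the symmetric contributions of the $R_a^{-1}$ and the fact that $R,R_a\subseteq L\times L$, this shows that $T$ is exactly the set of pairs $(u,v)\in L\times L$ whose images $\mu(u),\mu(v)$ are at distance at most $1$. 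It therefore suffices to produce the factorization for an arbitrary $(u,v)\in T$.

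Next I would apply Nivat's bimorphism theorem to $T$: there are a rational language $H\subseteq C^*$ and alphabetic homomorphisms $\alpha,\beta\colon C^*\to A^*$ with $T=\{(\alpha(w),\beta(w))\mid w\in H\}$ and, crucially, $\alpha(c)=1\Leftrightarrow\beta(c)\neq 1$ for every letter $c\in C$. Writing a witness $w=c_1\cdots c_n\in H$ and setting $a_i=\alpha(c_i)$, $b_i=\beta(c_i)\in A\cup\{1\}$, we obtain $u=a_1\cdots a_n$, $v=b_1\cdots b_n$ with $a_i=1\Leftrightarrow b_i\neq 1$; thus the first two conditions of the weak Lipschitz property hold automatically. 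It remains to bound, uniformly in $i$, the distance between the prefixes $\mu(u_i)$ and $\mu(v_i)$, where $u_i=a_1\cdots a_i=\alpha(c_1\cdots c_i)$ and $v_i=b_1\cdots b_i=\beta(c_1\cdots c_i)$.

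For the prefix bound I would fix a trim finite automaton recognizing $H$ together with an accepting run $q_0\xrightarrow{c_1}q_1\to\cdots\xrightarrow{c_n}q_n$ of $w$. For each state $q$ choose once and for all a word $e_q$ labelling a path from $q$ to a final state, and let $M=\max_q |e_q|$; this is finite because the automaton has finitely many states, and $M$ depends only on $T$, not on the pair $(u,v)$. Then $c_1\cdots c_i\,e_{q_i}\in H$, so $(u_i\,\alpha(e_{q_i}),\,v_i\,\beta(e_{q_i}))\in T$ and hence the distance between $\mu(u_i\alpha(e_{q_i}))$ and $\mu(v_i\beta(e_{q_i}))$ is at most $1$. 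Since right multiplication of an element of $S$ by a word of length $k$ moves at most $k$ edges in the Cayley graph, and $|\alpha(e_{q_i})|,|\beta(e_{q_i})|\le M$ because $\alpha,\beta$ are alphabetic, the triangle inequality gives distance at most $M+1+M$ between $\mu(u_i)$ and $\mu(v_i)$. Hence $P=2M+1$ works for every $i$ and every admissible pair $(u,v)$.

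The main obstacle is precisely this last step: controlling the drift of the prefixes. Everything hinges on replacing each prefix $c_1\cdots c_i$ by a completion to a word of $H$ whose appended part has length bounded independently of $i$ — this is exactly what the ``one escape word per state'' device buys, and it is the point where finiteness of the recognizing automaton is indispensable. I expect the only remaining delicate issue to be the boundary indices at which a prefix $u_i$ or $v_i$ is empty (so $\mu$ is a priori undefined there, the pairs of $T$ living in $A^+\times A^+$); these I would dispatch by the evident convention for the empty prefix, a routine matter that does not affect the constant $P$.
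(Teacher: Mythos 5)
Your proposal is correct and takes essentially the same route as the paper: apply Nivat's bimorphism theorem to the rational relation(s) witnessing distance at most $1$, complete each prefix of the witness word to a full word of $H$ by a suffix whose length is bounded by the (finite) number of states of an automaton recognizing $H$, and conclude with the triangle inequality. The only cosmetic differences are that the paper applies Nivat's theorem to $R$, $R_a$ and their inverses separately (taking the maximum of the resulting constants) instead of forming the single union $T$, and that it exploits $|m|=|\alpha(m)|+|\beta(m)|$ to obtain the slightly sharper constant $P=N+1$ in place of your $2M+1$.
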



\begin{lemma}
Let $H$ be a rational subset of $A^*$. Let  $M$ be the number of states of some automaton recognizing $H$. Then for any $w\in H$ and for 
any prefix $w_1$ of $w$, there exists a word $m$ of length at most $M$ such that $w_1m\in H$.
\end{lemma}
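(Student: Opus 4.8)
The plan is to prove this by a standard loop-removal (pumping) argument on an automaton for $H$. First I would fix a finite automaton $\mathcal{A}$ with state set $Q$, $|Q|=M$, recognizing $H$, with initial state $q_0$ and set $F$ of final states. Since $w\in H$, there is an accepting run of $\mathcal{A}$ on $w$; writing $w=w_1w_2$, I let $q\in Q$ be the state reached along this run after reading the prefix $w_1$. The tail of the same run reads $w_2$ from $q$ and ends in $F$, so the set of words taking $\mathcal{A}$ from $q$ into $F$ is nonempty.

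Next I would choose $m$ to be a word of minimal length taking $\mathcal{A}$ from $q$ to some final state; such a word exists by the previous step. The key point is to bound $|m|$ by $M$. If $|m|\geq M$, then the corresponding path from $q$ uses at least $M$ transitions and hence visits at least $M+1$ states (counting endpoints), so by the pigeonhole principle some state is repeated. Deleting the factor of $m$ that labels the loop between the two occurrences of that state yields a strictly shorter word still taking $q$ into $F$, contradicting the minimality of $|m|$. Therefore $|m|\leq M-1\leq M$.

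Finally, concatenating the two pieces gives a run $q_0\xrightarrow{w_1}q\xrightarrow{m}F$, which is an accepting run of $\mathcal{A}$ on $w_1m$. Hence $w_1m\in H$ with $|m|\leq M$, as required.

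I expect no serious obstacle: this is the usual short-path lemma for regular languages, applied to the ``residual'' language reachable from the state $q$ at the splitting point. The only points needing minor care are that the automaton need not be assumed deterministic, so one fixes a single accepting run of $w$ and reads off the state $q$ where the prefix $w_1$ ends, and the exact constant, which the loop-removal step pins down at $M-1$ and which comfortably meets the stated bound $M$.
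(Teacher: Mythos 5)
Your proof is correct: the paper itself omits the argument, stating only that the lemma is ``a straightforward exercise in automata theory,'' and your loop-removal/pigeonhole argument (fixing an accepting run of $w$, splitting at $w_1$, and taking a shortest word from the intermediate state into the final states) is precisely the standard argument intended, with the careful handling of nondeterminism and the sharper bound $M-1$ as welcome bonuses.
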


The proof of this lemma is a straightforward exercise in automata theory. 

\begin{proof} [Proof of Theorem \ref{fellow}]
By Nivat's theorem, for each rational relation $T$, there exists a rational subset $H$ of some finitely 
generated free monoid $B^*$ and 
alphabetic homomorphisms $\alpha, \beta:B^*\rightarrow A^*$ such that $T=\{(\alpha(w),\beta(w))|w\in H\}
$. Moreover, for any $b\in B$, $\alpha(b)=1$ if and only if $\beta(b)\neq 1$. Note that this ensures that $|m|=|\alpha(m)|+|\beta(m)|$ for any word in $m\in B^*$.

We apply this theorem to $T=R$ and $T=R_a$, $a\in A$, or the inverses of them; we then take $N$ to be the 
maximum of the corresponding constants $M$ given in the previous lemma. We take $P=N+1$.

Let $u,v$ be such that $\mu(u),\mu(v)$ are at distance at most 1. Then $(u,v)\in T$, where $T$ is one of the relations above. There exists $w
\in H$ such that $(u,v)=(\alpha(w),\beta(w))$. Let $w=c_1\ldots c_n$, $c_i\in B$. By the properties of $\alpha,\beta$, $n=|u|+|v|$. Let $\alpha(c_i)=a_i$, $\beta(c_i)=b_i$; then $a_i,b_i\in A\cup \{1\}$.

Let $i=0,\ldots,n$. By the previous lemma, for some word $m\in B^*$ of length at most $N$, we have 
$w'=c_1\cdots c_im\in H$ 
and therefore $(\alpha(w'),\beta(w'))\in T$. Then the distance of the images under $\mu$ of $\alpha(w')$ 
and $\beta(w')$ is at most 1. Since $\alpha(w')=a_1\cdots a_i\alpha(m)$, and $\beta(w')=b_1\cdots b_i
\beta(m)$, the distance between $\mu(a_1\cdots a_i)$ and $\mu(b_1\cdots b_i)$ is at most $|\alpha(m)| 
+1+|\beta(m)|$. This is equal to $|m|+1\leq N+1=P$.
\end{proof}

\subsection{Graded quasi-automatic semigroups}

We say that a semigroup $S$ is {\em graded} if it has a {\em degree}, that is a semigroup homomorphism $S$ into $(\mathbb N,+)$, and if it is generated as semigroup by elements of degree 1. There are many graded semigroups: $A^*$, $\mathbb N^k$, plactic monoids, braid monoids \ldots, and more generally each semigroup having a homogeneous presentation.

\begin{theorem} \label{graded}Let $S$ be a graded semigroup. If $S$ is quasi-automatic, then it is automatic.
\end{theorem}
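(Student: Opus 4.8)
The plan is to choose generators adapted to the grading and then verify the synchronous automatic condition directly, namely that the padded relations $\delta(R)$ and $\delta(R_a)$ are rational subsets of $((A\cup\$)\times(A\cup\$))^*$. First I would normalize the generating set. Let $d\colon S\to(\mathbb N,+)$ be the degree. Since $S$ is graded it is generated as a semigroup by a finite set of elements of degree $1$; fixing such a set and invoking Theorem \ref{gen} (independence of quasi-automaticity from the generating set), I get a quasi-automatic structure $(A,\mu,L,R,(R_a)_{a\in A})$ in which every $a\in A$ satisfies $d(\mu(a))=1$. The key arithmetic observation is then that for every word $w=a_1\cdots a_n\in A^+$ one has $d(\mu(w))=\sum_i d(\mu(a_i))=n=|w|$: the degree of the image of a word equals the length of the word.

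From this I obtain the crucial length constraints on the defining relations. If $(u,v)\in R$, then $\mu(u)=\mu(v)$ forces $|u|=d(\mu(u))=d(\mu(v))=|v|$, so $R$ is length-preserving and $\delta$ inserts no padding: $\delta(R)=R$, viewed in $(A\times A)^*$. Similarly, if $(u,v)\in R_a$, then $\mu(ua)=\mu(v)$ gives $|v|=d(\mu(ua))=|u|+1$, so $\delta$ appends exactly one symbol $\$$ to the first component and nothing to the second; that is, $\delta(R_a)=R_a\cdot\{(\$,1)\}$, which is again length-preserving over the alphabet $A\cup\$$.

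Finally I would conclude rationality in the synchronous (letter-by-letter) sense. The relation $\delta(R)=R$ is a length-preserving rational relation, hence rational as a subset of $(A\times A)^*$ by Eilenberg's theorem (\cite{E} IX.6.1, recalled at the end of Section \ref{rat}), and a fortiori as a subset of $((A\cup\$)\times(A\cup\$))^*$. The relation $\delta(R_a)=R_a\cdot\{(\$,1)\}$ is the product of the rational relation $R_a$ with the finite set $\{(\$,1)\}$, hence rational over $A\cup\$$; being length-preserving, it too is rational as a subset of $((A\cup\$)\times(A\cup\$))^*$ by the same theorem. By the definition recalled in the synchronous part of Section \ref{autom}, the language $L$ together with these $\delta(R)$ and $\delta(R_a)$ constitutes a synchronous automatic structure, so $S$ is automatic.

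The step I expect to require the most care is the reduction to degree-$1$ generators: one must be sure the grading hypothesis really supplies a generating set all of whose elements have degree $1$, and that Theorem \ref{gen} lets us transport the given quasi-automatic structure onto it; once $d(\mu(w))=|w|$ is available, the rest follows from closure properties and the length-preserving rationality result. A minor point to check is that for $(u,v)\in R_a$ the difference $|v|-|u|$ is exactly $1$ (never larger, never negative), which is what guarantees that $\delta$ pads deterministically with a single $\$$ and that $\delta(R_a)=R_a\cdot\{(\$,1)\}$ holds exactly.
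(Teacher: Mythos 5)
Your proposal is correct and follows essentially the same route as the paper: pass to a finite set of degree-$1$ generators (via the change-of-generators theorem), observe that the grading forces $|u|=|v|$ on $R$ and $|v|=|u|+1$ on $R_a$, pad $R_a$ by a single $\$$ to get $\{(u\$,v)\mid (u,v)\in R_a\}$, and invoke Eilenberg's theorem on length-preserving rational relations to conclude synchronous rationality. The only cosmetic difference is that you invoke Theorem \ref{gen} explicitly and phrase the padding as the product $R_a\cdot\{(\$,1)\}$, both of which the paper leaves implicit.
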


\begin{proof}

Recall from Section \ref{rat} the identification of any subset $T$ of pairs of words of equal length in $A^*$ with a subset of $(A\times A)^*$.
Recall also the following: let $T$ be a rational relation such that if $(u,v)\in T$, then $u,v$ have same length. Then $T$ is rational as subset of $(A\times A)^*$.

Since $S$ is graded, it has a generating subset $A'$, each element of which is of degree 1; since $S$ is finitely generated, some finite subset $A$ of $A'$ generates $S$. Let $(A,\mu, L, R, (R_a)_{a\in A})$ be a quasi-automatic structure on $S$. Since the generators are of degree 1, $\mu$ preserves the degree. 

By the property (2) of $R$, we must have $|u|=|v|$ for any $(u,v)\in R$; thus, by Eilenberg's theorem, $R$ is a rational subset of $(A\times A)^*$. 

Similarly, by property (3), for each $(u,v)\in R_a$, $|u|+1=|v|$; let $\$$ be a new symbol. The set $\{(u\$,v)|(u,v)\in R_a\}$ is clearly rational. Thus by Eilenberg's theorem, it is a rational subset of $(A\times A)^*$.

It follows that $R,R_a$ form an automatic semigroup structure in the sense of \cite{CRRT}, see also Section \ref{autom}.
\end{proof}

\subsection{Groups}

We have seen that a quasi-automatic semigroup is rationally presented. By a theorem of Anisimov and Seifert \cite{AS} (see also \cite{B} Theorem III.2.7), every subgroup of the free group which happens to be a rational subset of the free group is actually finitely generated. Hence every rationally presented group is actually finitely presented.
For a quasi-automatic group, this will be proved below, with the further property that the group 
has an exponential {\em isoperimetric inequality}.

An isoperimetric inequality for a group $G$ means that the group is of the form $F(A)/N$, where $A$ is 
finite,  $F(A)$ is the free group on $A$, $N$ is a normal subgroup of $F(A)$ generated, as normal 
subgroup, by a finite set $E$ and that for some function $f$
and any $g$ in $N$, 
$g$ is a product of no more than $f(|g|)$ elements of the form 
$uvu^{-1}$, $u\in F(A)$, $v\in E$ (here $|g|$ is the length of the reduced word representing $g$). Of course, the inequality is called exponential if $f$ grows exponentially.

Isoperimetric inequalities are important since they characterize finitely presented groups having a 
decidable word problem: the function $f$ must be a recursive function, see \cite{ECHLPT}, Theorem 
2.2.5. The terminology comes from the fact that the length of $g$ is the perimeter of a Dehn diagram for 
$g$, whereas the number of cells is $f(|g|)$, see
Section 2.2 and in particular p. 44 in \cite{ECHLPT}. 

\begin{theorem}\label{isop}
Let $G$ be a group which is a quasi-automatic semigroup. Then $G$ has a finite presentation, as group, with an exponential isoperimetric inequality.
\end{theorem}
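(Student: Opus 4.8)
The plan is to combine the rational presentation from Theorem~\ref{present} with the exponential bound on representative lengths from Theorem~\ref{repr}, and to translate the semigroup-level rewriting into the group-theoretic language of Dehn diagrams. First I would recall that by Theorem~\ref{present} the group $G$ is rationally presented as $\langle A, T\rangle$ with $T\subset A^+\times A^+$ rational, where $T$ is the finite union of rational relations built from $R$, the $R_a$, and the pairs $(a,l(a))$. Since $G$ is a group, the argument already sketched in the paragraph preceding the statement (via Anisimov--Seifert) shows that any rational subgroup of a free group is finitely generated; applied here, this yields that the kernel $N$ of the map $F(A)\to G$ is finitely generated as a normal subgroup, so $G$ is finitely presented. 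Thus I may fix a \emph{finite} set $E$ of relators generating $N$ normally, and the task reduces to bounding, for a reduced word $g$ of length $k$ representing the identity, the number of conjugates of elements of $E$ needed to write $g$.

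The key step is to exploit the Corollary following Theorem~\ref{present}, which gives, for any two words $u,v$ with $u=v$ in $S$, a rewriting sequence $w_0=u, w_1,\dots, w_n=v$ with $n=|u|+|v|+1$, where each step replaces a prefix $x$ of $w_i$ by a word $y$ with $(x,y)$ or $(y,x)$ in $T$, and where the lengths of the $w_i$ are exponentially bounded in $n$. I would apply this with $v$ equal to (a word representing) the identity, so that the rewriting sequence turns $u$ into the empty/trivial word through prefix replacements. Each single rewriting step $w_i\to w_{i+1}$ corresponds, at the group level, to inserting one conjugate $pzp^{-1}$ where $z$ is one of finitely many defining relators (the relator $xy^{-1}$ coming from a pair $(x,y)\in T$, expressed in terms of the finite normal generating set $E$ at bounded cost) and $p$ is the prefix being rewritten. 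The number of such steps is $n$, which is linear in $k$; but each relator $xy^{-1}$ of the \emph{rational} presentation has length at most exponential in $k$ (since the $w_i$, hence the replaced factors $x$ and the substituted $y$, are exponentially bounded), and rewriting each such long relator in terms of the finite set $E$ costs a number of conjugates proportional to its length. Multiplying a linear number of steps by an exponential cost per step yields an exponential isoperimetric inequality, i.e. $f(k)\le C^{k}$ for a suitable constant $C$.

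Concretely, I would argue as follows. Fix a finite set $E$ normally generating $N$. Each pair $(x,y)\in T$ gives a relator $r=\overline{x}\,\overline{y}^{-1}\in N$ (bars denoting reduced words in $F(A)$), of length at most exponential in $\max(|x|,|y|)$; since $r\in N$ and $E$ generates $N$ normally, $r$ is a product of at most, say, $D^{|x|+|y|}$ conjugates of elements of $E$, for a constant $D$ depending only on the finite data of the presentation. (This uniform exponential bound over all relators arising from the rational set $T$ is the one place needing care, and is the main obstacle: I must check that the exponential length bound on the $w_i$ from the Corollary controls the lengths of the individual replaced prefixes and their substitutes, and that each such relator — being an element of the finitely generated normal subgroup $N$ of given length — admits a conjugate-count bounded exponentially in its own length, which follows from the solvability/structure of the finitely presented group together with the length bound.) Given $g\in N$ of reduced length $k$, I take $u$ a word of length $k$ representing $g$ and $v$ trivial; the Corollary produces a sequence of $n=k+1$ prefix-rewriting steps, each contributing one relator of length at most $C_1^{k}$, hence at most $D^{C_1 k}$ conjugates of elements of $E$. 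Summing over the $n$ steps gives a total of at most $(k+1)\,D^{C_1 k}\le C^{k}$ conjugates for a suitable $C$, which is the desired exponential isoperimetric inequality.

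The step I expect to be the genuine obstacle is the uniform exponential conversion from a \emph{rational} presentation to the \emph{finite} one: I must pass from ``$g$ is a product of conjugates of the possibly long relators $xy^{-1}$ of the rational presentation'' to ``$g$ is a product of a bounded number of conjugates of the finitely many generators of $N$,'' while keeping track that the blow-up at each conversion is at most exponential in the relator length and that the relator lengths are themselves at most exponential in $k$. Once this bookkeeping is in place, the count is a matter of multiplying a linear number of steps by an exponential per-step cost, and the inequality $f(k)\le C^{k}$ drops out.
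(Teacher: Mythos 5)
Your proposal has two genuine gaps, and both sit exactly at the places you yourself flagged as ``needing care.'' The first is the finite-presentation step. You apply Anisimov--Seifert to conclude that the kernel $N$ of $F(A)\to G$ is finitely generated as a normal subgroup. But that theorem concerns a subgroup which is a \emph{rational subset} of the free group, and its conclusion is finite generation \emph{as a subgroup}; neither fits here. The kernel $N$ is the normal closure of the relator set coming from $T$, and normal closures do not preserve rationality; worse, whenever $G$ is infinite and $N\neq 1$, $N$ is a nontrivial normal subgroup of infinite index in a free group, hence is never finitely generated as a subgroup, hence (by Anisimov--Seifert itself) is never a rational subset. So the theorem simply cannot be applied to $N$ in any interesting case. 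There is no general principle converting the rational presentation of Theorem~\ref{present} into a finite one; the paper's remark before the statement is only a heuristic, and it explicitly defers the group case to the proof of Theorem~\ref{isop}, which exhibits the finite normal generating set concretely: the elements $\pi(g)$ with $\mu(g)=1$ and $|g|\le 2P+2$, where $P$ is the constant of the weak Lipschitz property.

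The second gap is fatal on its own: your per-step conversion is circular. You need that each relator $\pi(x)\pi(y)^{-1}$, $(x,y)\in T$, of length $\ell$ is a product of at most $D^{\ell}$ conjugates of elements of a finite set $E$, and you justify this by ``the structure of the finitely presented group together with the length bound.'' That claim \emph{is} the exponential isoperimetric inequality being proved, applied to the element $\pi(x)\pi(y)^{-1}$ of $N$. Finite presentability supplies no bound at all on the number of conjugates as a function of length (Dehn functions of finitely presented groups can exceed any prescribed recursive function), and even the exponential-time word problem of Theorem~\ref{word} yields only some recursive bound, not an exponential one. What fills this hole in the paper --- and what your proposal never invokes --- is the weak Lipschitz property, Theorem~\ref{fellow}. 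The paper's proof runs: for $w$ of length $k$ with $\mu(w)=1$, take representatives $u_t=l(p_t)$ of the prefixes $p_t$ of $w$ (Theorem~\ref{repr}, so the $|u_t|$ are exponentially bounded in $k$); consecutive representatives satisfy $\mu(u_{t+1})=\mu(u_tc_{t+1})$; the weak Lipschitz property interpolates group elements $g_i$ of length $\le P$ between corresponding prefixes of $u_t$ and $u_{t+1}$, and an elementary telescoping identity writes $\pi(u_t)\pi(c_{t+1})\pi(u_{t+1})^{-1}$ as a product of at most $|u_t|+|u_{t+1}|$ conjugates of relators $a_ig_ib_i^{-1}g_{i-1}^{-1}$ of length $\le 2P+2$. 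Chaining over $t=0,\dots,k-1$ telescopes to a conjugate of $z$, giving roughly $2kC^k$ conjugates of the finitely many short relators --- finite presentation and the exponential inequality in one stroke. Your outer bookkeeping (linearly many steps, each of exponential cost) matches the paper's, but without Theorem~\ref{fellow} the per-step cost bound has no proof, and with it your detour through Theorem~\ref{present} and Anisimov--Seifert is unnecessary.
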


In order to prove this result, we follow as much as possible the proof of Theorem 2.3.12 (due to Thurston) in \cite{E}, which asserts that an automatic group is finitely presented, with a quadratic isoperimetric inequality. 

We need some notations. We may assume that $S$ has a quasi-automatic semigroup structure, with respect to a finite set $B=A\sqcup A^{-1}$ of generators {\em closed under inversion}: this means that there is an anti-automorphism of the free monoid $B^*$, denoted $w\mapsto w^{-1}$, such that for any word $\mu(w^{-1})=\mu(w)^{-1}$ (we extend $\mu$ to $B^*$ by $\mu(1)=1$). 

The homomorphism $\mu$ factorizes as $\nu\circ\pi$, with $
\pi:B^*\rightarrow F(A)$ the natural homomorphism commuting with inversion, and $\nu$ a surjective group homomorphism $F(A)\rightarrow G$.

We begin by a lemma.

\begin{lemma} 
Let $n\geq 1$, and $g_0,\ldots,g_n,a_1,\ldots,a_n, b_1,\ldots, b_n$ be elements of a group. Let $h_i=g_0b_1\cdots b_{i-1}g_{i-1}^{-1}$ ($i=1,\ldots,n$).
Then
$$
\prod_{1\leq i\leq n} h_i(a_ig_ib_i^{-1}g_{i-1}^{-1})h_i^{-1}=a_1\cdots a_ng_nb_n^{-1}\cdots b_1^{-1}g_0^{-1}.
$$
\end{lemma}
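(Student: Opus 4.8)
The statement is a purely formal identity valid in an arbitrary group, so the plan is to prove it by exhibiting the left-hand side as a telescoping product; no property of the ambient group beyond the group axioms will be used. First I would introduce the partial products
$$t_i = a_1\cdots a_i\, g_i\, b_i^{-1}\cdots b_1^{-1}\, g_0^{-1}, \qquad i=0,1,\ldots,n,$$
with the usual convention that an empty product denotes the identity. Then $t_0 = g_0 g_0^{-1} = 1$, and $t_n$ is precisely the right-hand side of the asserted identity. Abbreviating the $i$-th factor inside the conjugation as $c_i = a_i g_i b_i^{-1} g_{i-1}^{-1}$, the goal becomes $\prod_{1\le i\le n} h_i c_i h_i^{-1} = t_n$.

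The heart of the argument is the local identity
$$h_i\, c_i\, h_i^{-1} = t_{i-1}^{-1}\, t_i \qquad (i=1,\ldots,n),$$
which I would check by direct substitution of $h_i = g_0 b_1\cdots b_{i-1} g_{i-1}^{-1}$. Expanding the left-hand side, the factor $g_{i-1}^{-1}$ at the tail of $c_i$ meets the factor $g_{i-1}$ at the head of $h_i^{-1} = g_{i-1} b_{i-1}^{-1}\cdots b_1^{-1} g_0^{-1}$ and cancels, leaving $g_0 b_1\cdots b_{i-1} g_{i-1}^{-1} a_i g_i b_i^{-1} b_{i-1}^{-1}\cdots b_1^{-1} g_0^{-1}$. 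For the right-hand side, $t_{i-1}^{-1} t_i$ contains the block $a_{i-1}^{-1}\cdots a_1^{-1}\, a_1\cdots a_{i-1}$, which collapses to the identity, and what remains is exactly the same word. Thus both sides coincide.

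Granting the local identity, the conclusion is immediate by telescoping:
$$\prod_{1\le i\le n} h_i c_i h_i^{-1} = \prod_{1\le i\le n} t_{i-1}^{-1} t_i = t_0^{-1} t_n = t_n,$$
which is the claimed expression. I do not expect any genuine obstacle here: the content is a bookkeeping computation, and the only points demanding care are respecting non-commutativity (the orders of the $a_i$ on the left and of the $b_i^{-1}$ on the right must be preserved throughout) and handling the empty-product conventions correctly at the boundary $i=1$, where $h_1 = g_0 g_0^{-1} = 1$ and $t_0 = 1$.
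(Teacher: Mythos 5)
Your proof is correct, and it is essentially the paper's argument in a different packaging: the paper proceeds by induction on $n$, and its inductive step verifies exactly your local identity $h_{i}c_{i}h_{i}^{-1}=t_{i-1}^{-1}t_{i}$ (multiplied on the left by $t_{i-1}$), so your telescoping product is just the induction unrolled. Both rest on the same two cancellations ($g_{i-1}^{-1}g_{i-1}$ inside the conjugate, and the collapsing block of $a$'s resp.\ $b$'s), so there is nothing to add.
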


\begin{proof}
If $n=1$, this is clear since $h_1=1$. Suppose that it is true for $n$ and prove it for $n+1$. We have $\prod_{1\leq i\leq n
+1} h_i(a_ig_ib_i^{-1}g_{i-1}^{-1})h_i^{-1}=\prod_{1\leq i\leq n} h_i(a_ig_ib_i^{-1}g_{i-1}^{-1})h_i^{-1}\times 
h_{n+1}(a_{n+1}g_{n+1}b_{n+1}^{-1}g_{n}^{-1})h_{n+1}^{-1}$. This is equal by induction to 
$a_1\cdots a_ng_nb_n^{-1}\cdots b_1^{-1}g_0^{-1}h_{n+1}(a_{n+1}g_{n+1}b_{n+1}^{-1}g_{n}^{-1})h_{n
+1}^{-1}=a_1\cdots a_ng_nb_n^{-1}\cdots b_1^{-1}g_0^{-1}g_0b_1\cdots b_{n}g_{n}^{-1}(a_{n+1}g_{n+1}
b_{n+1}^{-1}g_{n}^{-1})g_nb_n^{-1}\cdots b_1^{-1}g_0^{-1}=a_1\cdots a_{n+1}g_{n+1}b_{n+1}^{-1}\cdots 
b_1^{-1}g_0^{-1}$.
\end{proof}

\begin{lemma} Take the notations above.
Let  $u,v$ in $L$ be such that the distance of $\mu(u)$ and $\mu(v)$ is at most 1.
Let $\mu(v)=\mu(u)
\mu(c)$, $c\in B\cup \{1\}$. Then $\pi(u)\pi(c)\pi(v)^{-1}$ is a product of no more that $|u|+|v|$ elements of the form $\pi(h)\pi(g)\pi(h)^{-1}$, with $g,h\in B^*$, $\mu(g)=1$ and $g$ of length at most $2P+2$.
\end{lemma}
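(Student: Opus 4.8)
The plan is to combine the weak Lipschitz property (Theorem \ref{fellow}) with the algebraic identity of the preceding lemma, working throughout in the free group $F(A)$ through $\pi$. Since the quasi-automatic structure on $G$ is taken with respect to $B$, Theorem \ref{fellow} provides a constant $P$ for which $(G,B,L)$ has the weak Lipschitz property; the three conditions of that property supply exactly the data $a_i,b_i$ demanded by the preceding lemma, while the constant $P$ will bound the lengths of the relators.

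First I would apply the weak Lipschitz property to $u,v\in L$: as $\mu(u),\mu(v)$ are at distance at most $1$, there are $n$ and $a_1,\dots,a_n,b_1,\dots,b_n\in B\cup\{1\}$ with $a_i=1\Leftrightarrow b_i\neq1$, with $u=a_1\cdots a_n$ and $v=b_1\cdots b_n$ in $B^*$, and with the distance between $\mu(a_1\cdots a_i)$ and $\mu(b_1\cdots b_i)$ at most $P$ for every $i=0,\dots,n$. The condition $a_i=1\Leftrightarrow b_i\neq1$ makes exactly one of $a_i,b_i$ a genuine letter for each $i$, so $n=|u|+|v|$, which is precisely the bound we are after.

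Next I would encode the prefix discrepancies by short words. For each $i$ the distance bound says that $\mu(a_1\cdots a_i)^{-1}\mu(b_1\cdots b_i)$ is a product of at most $P$ generators, and since $B$ is closed under inversion I may pick $\gamma_i\in B^*$ with $|\gamma_i|\le P$ realizing this element, taking in particular $\gamma_0=1$ and $\gamma_n=c$ (the latter is consistent since $\mu(\gamma_n)$ must equal $\mu(u)^{-1}\mu(v)=\mu(c)$, with $|c|\le1$). I would then feed $\pi(a_i),\pi(b_i),\pi(\gamma_i)$ into the preceding lemma. As $\pi$ is a homomorphism commuting with inversion, each conjugator $h_i$ equals $\pi(h_i')$ for a word $h_i'=\gamma_0 b_1\cdots b_{i-1}\gamma_{i-1}^{-1}\in B^*$, and each inner factor is $\pi(g^{(i)})$ with $g^{(i)}=a_i\gamma_i b_i^{-1}\gamma_{i-1}^{-1}\in B^*$; the right-hand side of the lemma collapses, upon inserting $\gamma_0=1$ and $\gamma_n=c$, to $\pi(u)\pi(c)\pi(v)^{-1}$. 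This exhibits $\pi(u)\pi(c)\pi(v)^{-1}$ as a product of $n=|u|+|v|$ conjugates of the required form.

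Finally I would verify the two demands on each $g^{(i)}$: the length bound $|g^{(i)}|\le|a_i|+|\gamma_i|+|b_i|+|\gamma_{i-1}|\le 2P+2$ is immediate, and $\mu(g^{(i)})=1$ follows by a telescoping cancellation, substituting the defining equalities $\mu(\gamma_i)=\mu(a_1\cdots a_i)^{-1}\mu(b_1\cdots b_i)$ and simplifying with $\mu(b_1\cdots b_i)\mu(b_i)^{-1}=\mu(b_1\cdots b_{i-1})$ and $\mu(a_1\cdots a_i)=\mu(a_1\cdots a_{i-1})\mu(a_i)$. I expect the only delicate point to be this telescoping together with the endpoint bookkeeping, namely checking that the single choice $\gamma_0=1$, $\gamma_n=c$ simultaneously forces the right-hand side to be exactly $\pi(u)\pi(c)\pi(v)^{-1}$ and keeps $\mu(g^{(1)})=\mu(g^{(n)})=1$; realizing each group-distance by a word of length $\le P$ uses only the symmetry of $B$ and is routine.
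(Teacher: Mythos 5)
Your proposal is correct and follows essentially the same route as the paper: apply the weak Lipschitz property to get the interleaved decomposition $u=a_1\cdots a_n$, $v=b_1\cdots b_n$ with $n=|u|+|v|$, choose connecting words $\gamma_i\in B^*$ of length at most $P$ with the endpoint normalization $\gamma_0=1$, $\gamma_n=c$, check $\mu(a_i\gamma_i b_i^{-1}\gamma_{i-1}^{-1})=1$ by telescoping and the length bound $2P+2$, and then invoke the preceding algebraic identity under $\pi$. The paper's proof is the same argument with $g_i$ in place of your $\gamma_i$.
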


\begin{proof} We apply Theorem \ref{fellow}, and take the notations given in the definition of the weak Lipschitz property. Since $\mu(a_1\cdots a_i)$ and $\mu(b_1\cdots b_i)$ are at distance at most $P$, we may find $g_i$ in $B^*$ of 
length at most $P$ such that for any $i=0,\ldots,n$, $\mu(a_1\cdots a_i)\mu(g_i)=\mu(b_1\cdots b_i)$. We may assume that $g_0=1$ and $g_n=c$. 

Then, for $i\geq 1$, $\mu(a_1\cdots a_{i-1})\mu(a_ig_ib_i^{-1}g_{i-1}^{-1})=\mu(b_1\cdots b_i)
\mu(b_i^{-1}g_{i-1}^{-1})=\mu(a_1\cdots a_{i-1})$. Thus $\mu(a_ig_ib_i^{-1}g_{i-1}^{-1})=1$. The 
word 
$a_ig_ib_i^{-1}g_{i-1}^{-1}$ is of length $\leq 2P+2$.

We have $\pi(u)\pi(c)\pi(v)^{-1}=\pi(a_1)\cdots \pi(a_n)\pi(g_n)\pi(b_n)^{-1}\cdots 
\pi(b_1)^{-1}\pi(g_0)^{-1}$. To conclude, we apply the previous lemma, with $a_i$ replaced by $\pi(a_i)$ 
and so on.
\end{proof}

\begin{proof} [Proof of Theorem \ref{isop}]
We have $G=F(A)/N$, $N=Ker(\nu)$. We show below that $N$ is generated, as normal subgroup, by the 
finitely many elements $\pi(g)$, with $g\in \mu^{-1}(1)$ of length at most $2P+2$, and $P$ the constant of Theorem 
\ref{fellow}. Moreover, we show that each element $z$ in $N$ of reduced length $k$ is a product of no 
more than 
an exponential function of $k$ elements of the form $yxy^{-1}$, $y\in F(A)$, $x$ a generator of $N$. This 
will prove the theorem.

Since $N=\pi(\mu^{-1}(1))$, we may write $z=\pi(w)$, $w\in B^*$ of length $k$ and such that $\mu(w)=1$. 
Let $p_t$ be the prefix of length $t$ of $w$. Let $u_t=l(p_t)\in L$ for $t=0,\cdots,k-1$; let $u_k=u_0$. Note that $u_0=l(1)$ is independent of $w$ and we may assume that $|u_0|\leq 2P+2$ (by 
taking a larger $P$ if necessary). Then 
by the property of $l$, see Theorem \ref{repr}, $\mu(p_t)=\mu(u_t)$ for all $t$; for $t=k$, we have 
also $\mu(p_k)=\mu(w)=1=\mu(p_0)=\mu(u_0)=\mu(u_k)$.

By Theorem \ref{repr}, $|u_t|\leq N^t\leq N^k$. Let $w=c_1\cdots c_k$, $c_i\in B$. Note that, since $p_{t
+1}=p_tc_{t+1}$, $\mu(u_{t+1})=\mu(u_tc_{t+1})$. 

Applying the previous lemma to $u_t$ and $u_{t+1}$, we see that $\pi(u_t)\pi(c_{t+1})\pi(u_{t+1})^{-1}$ is a product 
of at most $|u_t|+|u_{t+1}|$ elements of the form $\pi(h)\pi(g)\pi(h)^{-1}$, with $g,h\in B^*$, $
\mu(g)=1$ and $g$ of length at most $2P+2$. Note that $|u_t|+|u_{t+1}|\leq 2L^k$.

Now the product of all $\pi(u_t)\pi(c_{t+1})\pi(u_{t+1})^{-1}$, $t=0,\ldots, k-1$, is equal to $\pi(u_0)\pi(w)
\pi(u_k)^{-1}=\pi(u_0)z\pi(u_0)^{-1}$. Recall that $\pi(u_0)$ is in $N$ and of length bounded by $2P+2$. Thus $x$ is a product of no more that $2+2kL^k$ elements of the form $y
\pi(g)y^{-1}$, with $y\in F(A)$, $g\in B^*$, $\mu(g)=1$ and $g$ of length at most $2P+2$. 
\end{proof}

We have mentioned before that the Lipschitz property for groups implies automaticity. For the weak property, and quasi-automaticity, we have the similar result.

\begin{theorem} Let $G$ be a group, $B=A\sqcup A^{-1}$ an alphabet closed under inversion, 
 $\mu:B^+\rightarrow G$ the natural homomorphism and $L\subset A^+$ a rational language such that $
 \mu(L)=G$. If $(G,B,L)$ has the weak Lipschitz property, then $G$ is quasi-automatic.
\end{theorem}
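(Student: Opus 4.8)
The plan is to establish the three conditions (1), (2), (3) of a quasi-automatic semigroup structure for $G$ with respect to the generating set $B$, using the language $L$ already provided. Since $\mu(L)=G$ is assumed, condition (1) is immediate. The entire difficulty lies in producing the rational relations $R=\{(u,v)\in L\times L\mid\mu(u)=\mu(v)\}$ and $R_c=\{(u,v)\in L\times L\mid\mu(uc)=\mu(v)\}$ for each $c\in B$, and proving they are rational. My strategy is to exhibit each of these relations as arising from Nivat-style bimorphisms built directly out of the weak Lipschitz data, thereby reversing the direction of Theorem \ref{fellow}.

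First I would observe that it suffices to treat a single relation $R_c$ (with $c\in B\cup\{1\}$, where $R_1=R$), since each is handled identically and $B$ is finite. Fix such a $c$. A pair $(u,v)\in L\times L$ lies in $R_c$ exactly when $\mu(uc)=\mu(v)$, i.e. when $\mu(u)$ and $\mu(v)$ are at distance at most $1$ in the Cayley graph, \emph{and} the generator realizing the step is compatible with $c$. By the weak Lipschitz hypothesis, whenever $\mu(u),\mu(v)$ are at distance at most $1$ there is a synchronized factorization $u=a_1\cdots a_n$, $v=b_1\cdots b_n$ with $a_i,b_i\in B\cup\{1\}$, $a_i=1\Leftrightarrow b_i\neq1$, and every prefix pair $(\mu(a_1\cdots a_i),\mu(b_1\cdots b_i))$ at distance at most $P$. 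The plan is to encode such factorizations as paths in a finite automaton over the alphabet $C=(B\cup\{1\})\times(B\cup\{1\})$ (discarding the forbidden pair $(1,1)$), with states recording, as a bounded certificate, the group element $\pi_{\le P}$ joining the two running prefixes — there are only finitely many elements of $G$ of Cayley-distance at most $P$ from the identity, so this is a finite state set. Reading a letter $(a,b)$ updates the bookkeeping by right-multiplication, and one accepts when the terminal offset equals $\mu(c)$. The two projections $\alpha,\beta:C^*\to B^*$ onto first and second coordinates are alphabetic homomorphisms, so by Nivat's bimorphism theorem the relation $\{(\alpha(w),\beta(w))\mid w\in H\}$ cut out by this automaton is a rational relation.

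The main obstacle will be verifying that the automaton's language $H$ captures \emph{exactly} the pairs in $R_c$, with no spurious or missing pairs, and that the finite state set genuinely closes under the transition dynamics. The subtle point is that the bounded offset $g$ joining $\mu(a_1\cdots a_i)$ and $\mu(b_1\cdots b_i)$ is not literally a state-determined quantity of the raw words but depends on the chosen factorization; I would resolve this by taking the \emph{states} to be precisely the finitely many group elements of distance at most $P$ from the identity, allowing nondeterministic transitions among all offsets consistent with reading $(a,b)$, so that a path exists if and only if \emph{some} valid weak-Lipschitz factorization does. Soundness (every accepted pair satisfies $\mu(uc)=\mu(v)$ and both lie in $L$) follows by tracking the offset along the path and intersecting with the recognizable constraint $L\times L$; completeness (every pair of $R_c$ is accepted) is exactly the weak Lipschitz property applied to $u,v$, which furnishes the synchronized factorization to thread through the automaton. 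Once $H$ is confirmed rational, the relation is rational by Nivat, and after intersecting with the recognizable set $L\times L$ — which preserves rationality, as recalled in Section \ref{rat} — conditions (2) and (3) hold, completing the quasi-automatic structure.
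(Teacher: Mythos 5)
Your proposal is correct and follows essentially the same route as the paper's proof: a finite-state device reading paired letters from $(B\cup\{1\})\times(B\cup\{1\})$ (with $(1,1)$ excluded), whose states track the bounded offset $\mu(a_1\cdots a_i)^{-1}\mu(b_1\cdots b_i)$ ranging over the finite set of group elements of length at most $P$, accepting when the terminal offset is $\mu(c)$; soundness comes from tracking the offset along the path and completeness is exactly the weak Lipschitz property. The only immaterial difference is that the paper builds the membership constraint $u,v\in L$ into the transducer's state set (taking states $Q^2\times G_0$ with $Q$ the states of an automaton for $L$), whereas you enforce it afterwards by intersecting with the recognizable set $L\times L$, which preserves rationality.
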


\begin{proof}
Let $P$ be as in the definition of the weak Lipschitz property (see the beginning of Section \ref{Lip}). Let $
(Q,i,F)$ be a finite deterministic automaton recognizing $L$, with transitions denoted by $qa$, if $q\in Q$, 
$a\in B$; more generally, $qw$ denotes the state reached from state $q$ after having read word $w$. Let $G_0$ denote the set of elements of $G$ of length $\leq P$ (that is, at distance at most $P$ 
from $1$ in the Cayley graph); $G_0$ is finite. 

We construct transducers $T_a$, $a\in B\cup\{1\}$, which will behave all the same, except for the final 
states. Their set of states is $Q^2\times G_0$. The initial state is $(i,i,1)$ and the final states are all $
(p,q,a)$ for all possible final states $p,q\in F$. There is a transition $(p,q,g)\rightarrow (p',q',g')$, labelled 
$(a,b)$, with $a,b\in B\cup\{1\}$ and exactly one of $a$ or $b$ equal to $1$, if and only 
if: $pa=p',qb=q'$ and $g'=\mu(a)^{-1}g\mu(b)$.

It follows easily from this definition that if in $T_a$, there is a path $(p,q,g)\rightarrow (p',q',g')$ labelled $(u,v)$, then $g'=\mu(u)^{-1}g\mu(v)$.

We verify that $T_a$ recognizes $R_a$. Let $(u,v)$ be recognized by $T_a$. Let $ (i,i,1)\rightarrow 
(p',q',a)$ be a successful path with label $(u,v)$. Then $iu=p'\in F$, $iv=q'\in F$, $a=\mu(u)^{-1}\mu(v)\in G_0$. 
Hence $u,v\in L$ and $\mu(u)a=\mu(v)$; thus $(u,v)\in R_a$.

Conversely, suppose that $(u,v)\in R_a$. By the Lipschitz property, we may 
write $u=a_1\ldots a_n$, $v=b_1\ldots b_n$, $n=|u|+|v|$, and for any $i$ exactly one of $a_i$ or $b_i$ is 
equal to $1$; moreover, for any $i$, the distance from $\mu(a_1\cdots a_i)$ to $\mu(b_1\cdots b_i)$ is 
at most $P$. Thus there exist $g_i\in G_0$ such that $\mu(a_1\cdots a_i)g_i=\mu(b_1\cdots b_i)$. Note that, since $\mu(u)a=\mu(v)$, we have $g_n=a$. Let $p_0=i, p_1,p_2,\ldots,p_n$ (resp. $q_0=i, q_1,q_2,\ldots,q_n$) be the states of the path in the automaton $(Q,i,F)$ labelled $u$. In particular, $p_n,q_n\in F$ since $u,v\in L$. It 
follows that we have in $T_a$ a path $(i,i,1)=(p_0,q_0,g_0)\rightarrow (p_1,q_1,g_1) \rightarrow 
(p_2,q_2,g_2) \cdots \rightarrow (p_n,q_n,g_n)$, whose transitions are labelled $(a_1,b_1)$, $(a_2,b_2)$, $
\cdots(a_n,b_n)$ (with $g_0=1$): indeed, for any $i\geq 1$, we have $g_i=\mu(a_i)^{-1}g_{i-1}\mu(b_i)$. Since $g_n=a$, the path is successful in $T_a$, which therefore recognizes $(u,v)$.
\end{proof}

\section{Some open questions}

1. We have seen that asynchronous automatic semigroups are quasi-automatic. But we have no example of a semigroup that is quasi-automatic, but not asynchronous automatic. Note that it is known that deterministic 2-tape automata do not recognize all rational relations, see \cite{FR} Lemma 3.

2. It is known that if a group is bi-automatic (that is, the group and the opposite one are automatic), then 
the conjugation problem is decidable (see \cite{ECHLPT} Theorem 2.5.7). We do not know if the 
conjugation problem for bi-quasi-automatic groups is decidable. A first attempt to mimick the proof in 
\cite{ECHLPT} leads to undecidable properties of rational relations, see \cite{B} Theorem III.8.4.

3. It is known that for automatic groups (synchronous or asynchronous), one may find a rational set of unique representatives, see \cite{ECHLPT} Theorems 2.5.1 and 7.3.2. We do not 
know if this is true for quasi-automatic semigroups. A naive attempt to prove this leads to intersection of 
rational relations, which are not rational in general, see \cite{B} Example III.2.5. Note that it is conjectured  
in \cite{J} that if a rational relation is an equivalence relation, then it has a rational cross-section (it is 
proved for deterministic relations). This 
would give an affirmative answer to the previous question.

4. Is it decidable if a quasi-automatic semigroup is automatic (synchronous or asynchronous)? The similar 
problem for rational relations is undecidable, see \cite{B} III.8.4.

%
%
%

5. Given a quasi-automatic semigroup $S$, we do not know 
whether or not the following problems are decidable: is $S$ a monoid?
is $S$ finite?

Concerning  the first question, assume we are given representatives   $l_{a}\in L$
for all $a\in A$. 
The existence of  left neutral elements is decidable. Indeed, for every word $l\in L$,
its image is a left neutral element if and only if $(l,l_{a})\in R_{a}$. Therefore,
 denoting by $\pi$ the projection of $A^*\times A^*$ onto the first component,
the image of $l\in L$  by $\mu$ is  a left neutral element, if 
and only if it belongs to the   intersection of the following  (finitely many) rational subsets
of $A^*$
$$
E=\bigcap_{a\in A} 
\pi((A^*\times \{l_{a}\})\cap R_{a})
$$
which can be effectively computed. Now, we prove that the existence of 
a neutral element is semi-decidable. Indeed, because of the above discussion
it suffices to find some left neutral element which is also
a right neutral element. This can be done as follows: enumerate all elements 
of $E$ and for each $e\in E$ check that $(\bigcup_{a\in A}  (l_a,l_a))\subseteq  R_{e}$
for all $a\in A$. If the semigroup has a neutral element the procedure will find it, 
otherwise it will keep computing unless $E$ is finite. 

The second problem is also semi-decidable. Indeed, we can compute a 
representative $l({w})\in L$ of every $w\in A^*$, see Theorem \ref{repr}. 
Then the semigroup is finite if and only if there exists an integer $n$ such that 
for all $l({w})$ with $|w|=n+1$ there exists some $l({u})$ with $u<n$ such that 
$(l(u),l(w))\in R$.

Note that if $S$ is an automatic semigroup, then it has a language of unique 
representatives (see \cite{CRRT} Corollary 5.6), so that finiteness is evidently decidable.

 \end{document}